\theoremstyle{plain}
\newtheorem{theorem}{Theorem}[subsection]
\newtheorem{proposition}[theorem]{Proposition}
\newtheorem{lemma}[theorem]{Lemma}
\theoremstyle{definition}
\newtheorem{definition}[theorem]{Definition}
\newtheorem*{defn*}{Definition}
\newtheorem*{question*}{Question}
\newtheorem{example}[theorem]{Example}
\newtheorem*{example*}{Example}
\newtheorem{remark}[theorem]{Remark}
\newtheorem{algorithm}[theorem]{Algorithm}
\title{The Survival Complex}
\author{Anna-Rose G. Wolff }
\date{\today}
\begin{document}

\begin{abstract}
We introduce a new way to associate a simplicial complex called the \emph{survival complex} to a commutative semigroup with zero. Restricting our attention to the semigroup of monomials arising from an Artinian monomial ring, we determine that any such complex has an isolated point.  Indeed, we show that there is exactly one isolated point essentially only in the case where the monomial ideal is generated purely by powers of the variables.  This allows us to recover Beintema's result that an Artinian monomial ring is Gorenstein if and only if it is a complete intersection.  A key ingredient of the translation between the pure power result and Beintema's result is given by the one-to-one correspondence we show between the so-called \emph{truly isolated} points of our complex and the generators of the socle of the defining ideal.  In another relation between the geometry of the complex and the algebra of the ring, we essentially give a correspondence between the nontrivial connected components of the complex and the factors of a fibre product representation of the ring. Finally, we explore algorithms for building survival complexes from specified isolated points.  That is, we work to build the ring out of a description of the socle.
\end{abstract}

\maketitle
\section[Introduction]{Introduction}

\subsection{What Is a Survival Complex}

 Let $(S,\cdot,0)$ be a commutative multiplicative semigroup with an absorbing element $0$. Then one can form an abstract simplicial complex, called the survival complex of $S$, whose vertices are the nonzero zero-divisors of $S$ and where one forms a simplex from $s_1, \dotsc, s_n \in S$ whenever $s_1 \cdots s_n \neq 0$ -- that is, whenever a subset ``survives'' multiplication.\\
 In particular we investigate the survival complex of $S$, denoted by $\Sigma(S)$ when $S$ is as follows. Let $k$ be a field, $X_1, \dotsc, X_t$ variables, $I$  a monomial ideal in $A := k[X_1, \dotsc, X_t]$, and $R := A/I$. Then $S$ consist of the images of the nontrivial monomials of $A$ in $R$, under the inherited multiplication, along with $0$. For this paper, we will only consider ideals $I$ where $R = A/I$ contains only finitely many monomials. In other words, for the purpose of this paper all survival complexes are assumed to be finite.  \\

 \begin{example}
Let $R = \frac{k[x,y,z]}{(x^3,y^3)}$. Then the survival complex $\Sigma (R)$ is composed of all surviving monomials of the ring $R$\\

\end{example}

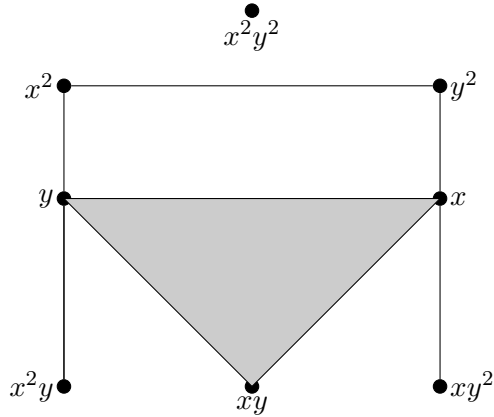
\begin{figure}[h]
\begin{center}

\begin{tikzpicture}
[scale=5, vertices/.style={draw, fill=black, circle, inner sep=1.8pt}]

\node[vertices] (y) at (-0.5,0) {};
\node[left] at (-0.5,0) {$y$};
\node[vertices] (x) at (0.5,0) {};
\node[right] at (0.5,0) {$x$};

\node[vertices] (xy) at (0,-0.5) {};
\node[below] at (0,-0.5) {$xy$};
\node[vertices] (yy) at (0.5,0.3) {};
\node[right] at (0.5,0.3) {$y^2$};
\node[vertices] (xx) at (-0.5,0.3) {};
\node[left] at (-0.5,0.3) {$x^2$};

\node[vertices] (xxy) at (-0.5,-0.5) {};
\node[left] at (-0.5,-0.5) {$x^2y$};
\node[vertices] (xxyy) at (0,0.5) {};
\node[below] at (0,0.5) {$x^2y^2$};
\node[vertices] (xyy) at (0.5,-0.5) {};
\node[right] at (0.5,-0.5) {$xy^2$};

\filldraw[fill=black!20, draw=black] (0,-0.5)--(-0.5,0)--(0.5,0)--cycle;

\draw (xyy)--(x);
\draw (yy)--(x);
\draw (yy)--(xx);
\draw (y)--(xx);
\draw (y)--(xxy);
\draw (y)--(xxy);

\end{tikzpicture}

\caption{Survival Complex of $R = \frac{k[x,y,z]}{(x^3,y^3)}$}
\end{center}
\end{figure}

%
%

\begin{definition} Let an ideal $I$ of the ring $A = k[x_1, \ldots, x_t]$ be generated only by monomials of the form $x_{j}^a$. Then we call the ring $S = \frac{A}{I}$ a \emph{pure power} ring and the survival complex $\Sigma(S)$ a \emph{pure power} survival complex.\\
\end{definition}

For an example of a pure power ring and its associated pure power survival complex, see Figure 1.\\
%
%
%

\subsection{Connection to Zero Divisor Graphs and Stanley Reisner Complexes}
The zero divisor graph of a semigroup is an object which has been studied by multiple authors \cite{Meyer1, Meyer2, ZGSemigroup}. In fact, in \cite{Meyer1} Lisa DeMeyer and Frank DeMeyer introduce a way to associate the zero divisor graph of a semigroup with an abstract simplicial complex. However, the authors of \cite{Meyer1,Meyer2} are primarily interested in discovering when, given an arbitrary graph, that graph is the zero divisor graph of a semigroup.\\
Our work is related to this idea in the following way. Given an arbitrary survival complex $\Sigma (S)$, the dual of the 1-skeleton of $\Sigma (S)$ will be a graph where two monomials $p,q \in S$ are connected precisely when $pq = 0$. In other words, the dual of the 1-skeleton of $\Sigma (S)$ is the zero divisor graph of the monomial semigroup of $S$.\\
Instead of analyzing arbitrary simplicial complexes in order to determine when there exists an $S$ such that $\Sigma (S)$ is the complex of $S$, we are more interested in what $\Sigma (S)$ tells us about the algebraic structure of $S$. While our approach is combinatiorial in nature, the structure of $\Sigma (S)$ allows us to draw conclusions about such algebraic properties as the dimension of the socle of $S$.\\
There are also similarities between the structure of the survival complex and the structure of Stanley-Reisner complexes. While we do not go into the similarities and differences in this thesis, we would like to note that given a squarefree monomial ideal $I \subset k[x_1, \ldots, x_n]$ the Stanley-Reisner complex of $\frac{k[x_1, \ldots, x_n]}{I}$ is an induced subcomplex of the survival complex $\Sigma \left(\frac{k[x_1, \ldots, x_n]}{I + (x_{1}^2, \ldots, x_{n}^2)} \right)$. The reader who wishes to learn more about Stanley-Reisner complexes is advised to look at \cite{C-MRings} and \cite{SurveySR}.

\subsection{Basic Properties of Survival Complexes}

An important component of survival complexes is isolated points. A point in a survival complex is \emph{isolated} when there are no edges connecting that point to any other point. This leads us to the following definitions.\\

\begin{definition}
Let $\Sigma (R)$ be the survival complex of $R = \frac{k[x_1, \ldots, x_n]}{I}$. We say that a monomial $m \in \Sigma (R)$ is \emph{truly isolated} if for all $1 \leq i \leq n$, $x_im =0$.\\
\end{definition}

\begin{remark}
The reader should be comfortable with the equivalence between this definition and the statement that $mq = 0$ for all monomials $q \in \Sigma (R)$. If $mq = 0$ for all monomials in $\Sigma (R)$, then trivially for all $1 \leq i \leq n$, $x_im =0$. Furthermore, since every monomial $q \in \Sigma (R)$ is composed of the variables $x_1, \ldots, x_n$, then if for all $1 \leq i \leq n$, $x_im =0$, we automatically get that $mq = 0$ for all monomials $q \in \Sigma (R)$.\\
\end{remark}

\begin{definition}
Let $\Sigma (R)$ be the survival complex of $R = \frac{k[x_1, \ldots, x_n]}{I}$. We say that a variable $x_t \in \Sigma (R)$ is \emph{quasi-isolated} if for all $1 \leq i \leq n$ with $i \neq t$, $x_ix_t =0$, but $x_{t}^2 \neq 0$ and $x_{t}^3 = 0$.\\
\end{definition}

\begin{remark}
Notice that we need not worry about an arbitrary monomial $m \in \Sigma (R)$ in our definition of quasi-isolated. This follows from the fact that for an arbitrary monomial $m \in \Sigma (R)$ if $m^2 \neq 0$, then for some factor $x_i$ of $m$ we have that $x_im \neq 0$. Hence there is an edge from $m$ to $x_i$ making $m$ nonisolated.\\
Quasi-isolated points are precisely those points $x_t$ where there are no edges from $x_t$ to any other $x_i$, but if loops were permitted in $\Sigma (R)$ there would be a loop on $x_t$.\\

\end{remark}

When we simply refer to the isolated points of a survival complex, it is assumed that we mean both the truly isolated and the quasi-isolated points of the survival complex in question.\\

\begin{proposition}
Every finite dimensional survival complex must have at least one isolated point.
\end{proposition}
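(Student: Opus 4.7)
The plan is to produce a truly isolated point simply by choosing a nonzero monomial of maximum total degree in $R$. Since $R = k[x_1,\ldots,x_n]/I$ is assumed to contain only finitely many monomials, the set of nonzero monomials of positive degree in $R$ is finite, and whenever it is nonempty it has a well-defined maximum total degree.

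Next I would let $m$ be any nonzero monomial of $R$ achieving this maximum degree. For each variable $x_i$, the monomial $x_i m$ has strictly greater total degree than $m$, so by maximality of $\deg m$ we must have $x_i m = 0$ in $R$. Since $m$ has positive degree and $R$ is Artinian, $m$ is a nonzero zero-divisor and hence a vertex of $\Sigma(R)$; the vanishing $x_i m = 0$ for every $i$ is then exactly the condition in the definition immediately preceding the proposition, so $m$ is a truly isolated vertex, in particular an isolated point in the agreed-upon sense.

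The only case not handled is the degenerate situation where $R$ has no nonzero monomials of positive degree at all, i.e., $R = k$. Here $\Sigma(R)$ has no vertices, and the conclusion is either vacuous or ruled out by reading ``finite dimensional'' as requiring a nonempty complex; a sentence to that effect should suffice. The main obstacle, such as it is, is therefore not mathematical but a matter of clarifying this boundary case, since the substantive content of the proposition -- that a top-degree nonzero monomial is annihilated by every variable -- is immediate from the degree filtration on $R$.
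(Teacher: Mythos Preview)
Your argument is correct and is essentially the same as the paper's: both pick a monomial of maximum total degree and observe that multiplying by any other vertex (in particular any variable) would raise the degree, forcing the product to vanish. The only cosmetic differences are that the paper phrases it as a contradiction while you argue directly, and you note explicitly that the resulting point is \emph{truly} isolated and flag the degenerate $R=k$ case; neither changes the substance.
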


\begin{proof}
Assume there exists a finite dimensional survival complex $\Sigma (S)$ which does not contain an isolated point.\\
As $\Sigma (S)$ is finite dimensional, it must contain only a finite number of monomials. Since the set of degrees of these monomials is a finite set, there must be a maximum possible degree.\\
Take a monomial $m$ that is of maximum degree among the monomials of $\Sigma (S)$.
Since $m$ is not isolated, there exists some monomial $x  \in \Sigma (S)$ such that $xm \neq 0$.
But then the degree of $xm$ is greater than the degree of $m$.
Therefore the degree of $m$ is not maximal, a contradiction.
Hence every finite dimensional survival complex has at least one isolated point.
\end{proof}

One of the main questions considered in regards to survival complexes was the relationship between the ideal used to create a survival complex $\Sigma (S)$ and the number of isolated points of $\Sigma (S)$.\\


\begin{proposition}
Any pure power survival complex 
contains exactly one truly isolated point.
\end{proposition}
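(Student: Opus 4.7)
The plan is to identify the unique truly isolated point explicitly and then show any truly isolated monomial must equal it. Since $R$ is a pure power ring, write $I = (x_1^{a_1}, \ldots, x_n^{a_n})$ with each $a_i \geq 2$ (if some $a_i = 1$ then $x_i = 0$ in $R$ and that variable is irrelevant; if some variable is absent from the generators then $R$ is not Artinian, which contradicts our finiteness hypothesis on $\Sigma(R)$). Every nonzero monomial of $R$ has a unique representative of the form $x_1^{b_1} \cdots x_n^{b_n}$ with $0 \le b_i \le a_i - 1$, and conversely every such expression is nonzero in $R$ since $I$ is generated purely by powers.

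First I would exhibit the candidate $m^\ast := x_1^{a_1-1} \cdots x_n^{a_n-1}$ and check it is truly isolated. For each $i$, the product $x_i \cdot m^\ast$ has $x_i$-exponent equal to $a_i$, which places it in $I$, so $x_i m^\ast = 0$ in $R$. Hence $m^\ast$ is truly isolated by the definition.

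Next I would prove uniqueness. Suppose $m = x_1^{b_1} \cdots x_n^{b_n}$ with $0 \le b_i \le a_i - 1$ is truly isolated. For each $i$, the requirement $x_i m = 0$ in $R$ means the monomial $x_1^{b_1} \cdots x_i^{b_i+1} \cdots x_n^{b_n}$ lies in $I$. Because $I$ is generated by the pure powers $x_j^{a_j}$, a monomial lies in $I$ iff some exponent meets or exceeds the corresponding $a_j$. In our bumped monomial, the only exponent that changed is the $i$-th, and all other exponents are $\le a_j - 1$; therefore we must have $b_i + 1 \ge a_i$, forcing $b_i = a_i - 1$. Since this holds for every $i$, we get $m = m^\ast$.

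The main obstacle, if any, is in being precise about why a monomial lies in $I$ \emph{only} when one of its exponents is already large enough; this is exactly the point where the pure power hypothesis is essential, since for a general monomial ideal a bumped monomial could become divisible by a mixed generator without any single exponent hitting a prescribed threshold. Once that observation is in place, the argument reduces to the one-line divisibility check above.
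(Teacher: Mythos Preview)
Your proof is correct and follows essentially the same approach as the paper: both exhibit the candidate $x_1^{a_1-1}\cdots x_n^{a_n-1}$, verify it is truly isolated, and then argue that any truly isolated monomial must have each exponent equal to $a_i-1$. The only cosmetic difference is that the paper phrases uniqueness via divisibility (a truly isolated $g$ with some deficient exponent would properly divide $q$, forcing $q=0$), whereas you argue coordinatewise using the explicit membership criterion for a pure power ideal; these are equivalent formulations of the same step.
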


\begin{proof}
 
Let $\Sigma (S)$ be the survival complex of $S = \frac{A}{(x_{1}^{m_1}, \ldots,x_{n}^{m_n})}$ where $A = k[x_{1 }, \ldots, x_n]$ and $m_j \geq 2$ for all $j$.
Then the point $q =x_{1}^{m_1 -1} \dots x_{n}^{m_n -1}$ is in $\Sigma (S)$ and is obviously truly isolated since $x_jm =0$ for all $1 \leq j \leq n$.\\
Assume that $ g = x_{1}^{t_1} \dots x_{m}^{t_m}$ is truly isolated where each $t_j \geq 0$ and that for some $j$, $t_j < m_j - 1$. Then clearly $g$ is a factor of $q$ in $A$ which means that $q = 0$ in $S$. Hence $q$ is in the defining ideal of $S$, a contradiction.\\
Thus every pure power survival complex contains only one truly isolated point.\\
\end{proof}

\begin{proposition}
The pure power survival complex $\Sigma (S)$ generated by $S = \frac{k[x] }{(x^3)}$ is the only pure power survival complex to contain a quasi-isolated point.\\

\end{proposition}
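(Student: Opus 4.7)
The plan is to unpack the definition of quasi-isolated together with the pure power hypothesis and show both constraints pin down the ring completely.

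Suppose $S = k[x_1,\dots,x_n]/(x_1^{m_1},\dots,x_n^{m_n})$ with each $m_j\geq 2$, and suppose $x_t$ is quasi-isolated in $\Sigma(S)$. First I would extract the purely single-variable information. The condition $x_t^3=0$ in $S$ means $x_t^3$ lies in the defining ideal, so $x_t^3$ is divisible by some generator $x_j^{m_j}$; since the only variable appearing in $x_t^3$ is $x_t$, we must have $j=t$ and $m_t\leq 3$. Combining with $x_t^2\neq 0$ (which rules out $m_t\leq 2$) gives $m_t=3$.

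Next I would show that once there is a second variable, the cross-killing condition $x_ix_t=0$ for $i\neq t$ cannot hold. For any $i\neq t$ the monomial $x_ix_t$ has total degree two and uses only two distinct variables, each with multiplicity one. For this monomial to lie in the ideal $(x_1^{m_1},\dots,x_n^{m_n})$, some generator $x_j^{m_j}$ would have to divide it, forcing $m_j=1$, which contradicts the standing assumption $m_j\geq 2$ on pure power rings. Hence $x_ix_t\neq 0$ in $S$ whenever $i\neq t$, so there is an edge from $x_t$ to $x_i$ in $\Sigma(S)$, contradicting quasi-isolation.

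Therefore $n=1$, and together with $m_1=3$ we conclude $S=k[x]/(x^3)$. Conversely, in this ring $x^2\neq 0$, $x^3=0$, and the condition on cross products is vacuous, so $x$ really is quasi-isolated, which confirms this is the unique pure power example.

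I do not expect any genuine obstacle: the argument is essentially a divisibility check in a monomial ideal. The only point requiring slight care is the standing hypothesis $m_j\geq 2$ used implicitly in the definition of a pure power ring, which is what prevents any degree-two monomial $x_ix_t$ with $i\neq t$ from landing in the ideal; this is the step where the ``pure power'' hypothesis does all the work.
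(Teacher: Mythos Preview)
Your proof is correct and follows essentially the same approach as the paper: both argue that in a pure power ring with at least two variables the product $x_ix_t$ survives (so $x_t$ cannot be quasi-isolated), forcing $n=1$, and then pin down $m_t=3$. The only cosmetic difference is ordering---you extract $m_t=3$ first directly from the defining conditions $x_t^2\neq 0$, $x_t^3=0$, whereas the paper first reduces to $n=1$ and then rules out $m_t>3$ by observing that $x_t^3\neq 0$ would produce an edge from $x_t$ to $x_t^2$.
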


\begin{proof}
Let $S = \frac{k[x] }{(x^3)}$ and consider $\Sigma (S)$. This complex is composed of precisely two points, $x$ and $x^2$. $x$ is multiplied by itself to form $x^2$. However, as we do not allow loops in survival complexes, the result is that $x$ is quasi-isolated.\\

\noindent Now let $S = \frac{k[x_{1 }, \ldots, x_n]}{(x_{1}^{m_1}, \ldots,x_{n}^{m_n})}$ and consider $\Sigma (S)$.\\
Assume that $x_t \in \Sigma (S)$ is a quasi-isolated point.
Since $x_{t}^2$ must be in $\Sigma (S)$ for $x_t$ to be quasi-isolated, $m_t \geq 3$.
Assume that $n >1$. Then there is another monomial $x_j \in \Sigma (S)$.
But since $S$ is a pure power complex, $x_tx_j \in \Sigma (S)$.
Hence $x_t$ is not quasi-isolated.\\ Therefore, $n = 1$.\\

\noindent Thus we have $S = \frac{k[x] }{(x_{t}^{m_t})}$
When $m_t = 3$, we know that $\Sigma (S)$ has a quasi-isolated point so we only need to consider the case when $m_t > 3$.
Then $x_t$, $x_{t}^2$, and $x_{t}^3$ are in $\Sigma (S)$ and $x_{t}^3 = x_tx_{t}^2$.\\
Hence there is an edge from $x_t$ to $x_{t}^2$ which implies that $x_t$ is not quasi-isolated.\\

Therefore, the pure power survival complexes $\Sigma (S)$ generated by $S = \frac{k[x] }{(x^3)}$ is the only pure power survival complex to contain a quasi-isolated point.\\

\end{proof}

By definition every factor of an isolated point is a point in the survival complex. Furthermore, every point in the survival complex corresponds to a simplex in the survival complex. Thus in order to find the largest simplex in a survival complex, you only have to examine the isolated points.\\


\begin{proposition} 
Let $\Sigma (S)$ be the survival complex of $S = \frac{k[x_1, \ldots,x_n]}{I}$. Then $p \neq 0$ is a monomial in $\Sigma(S)$ if and only if some truly isolated point of $\Sigma(S)$ has $p$ as a factor.
\end{proposition}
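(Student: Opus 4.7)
My plan is to prove both directions of the biconditional, with the easy direction first and then use a degree-maximization argument reminiscent of the proof of the earlier proposition on the existence of isolated points.

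For the ``if'' direction, suppose $q$ is a truly isolated point of $\Sigma(S)$ that has $p$ as a factor. Then there is some monomial $r$ of $A$ with $q = p \cdot r$ in $S$. Since $q \neq 0$ in $S$ (as $q \in \Sigma(S)$), we must have $p \neq 0$ in $S$ as well, and hence $p$ is a monomial of $\Sigma(S)$.

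For the ``only if'' direction, given a nonzero monomial $p \in \Sigma(S)$, I would construct a truly isolated point having $p$ as a factor by iteratively multiplying by variables that do not annihilate. Specifically, if $p$ is already truly isolated, we are done. Otherwise, by definition there exists some $x_i$ with $x_i p \neq 0$, so $p_1 := x_i p$ is a monomial in $\Sigma(S)$ of strictly larger degree with $p$ as a factor. Iterate: if $p_k$ is not truly isolated, choose $x_j$ with $x_j p_k \neq 0$ and set $p_{k+1} := x_j p_k$.

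The key step is ensuring the process terminates. Because $S$ is Artinian (only finitely many nontrivial monomials, as assumed throughout the paper), there is a global upper bound on the degrees of monomials in $\Sigma(S)$. Since the degree strictly increases at each step, the sequence $p_0 = p, p_1, p_2, \ldots$ cannot continue indefinitely, so some $p_N$ must be truly isolated. By construction every $p_k$ has $p_{k-1}$ (and hence $p$) as a factor, so $p_N$ is the desired truly isolated point containing $p$ as a factor. I don't expect any real obstacle here; the only subtle point is appealing to finiteness of the complex to bound the degrees, which the paper has already established as a standing hypothesis.
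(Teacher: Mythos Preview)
Your proof is correct and takes essentially the same approach as the paper: the paper phrases the ``only if'' direction in terms of the divisibility poset on monomials of $S$ and takes the top of a maximal chain through $p$, which is exactly what your iterative multiply-by-a-surviving-variable construction produces. Both arguments rest on the same finiteness hypothesis to guarantee the maximal element exists (equivalently, that your iteration terminates).
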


\begin{proof}
First, note that given $S = \frac{k[x_1, \ldots, x_n]}{I}$, there is a natural poset structure on the monomials of $S$. For $p,q \in S$ simply define $p \leq q$ whenever $p$ is a factor of $q$.\\
Furthermore, since we are only considering rings with a finite number of monomials, this poset is finite.\\

Assume that some truly isolated point $q \in \Sigma (S)$ has $p$ as a factor. Since $q \not \in I$, this implies that $p \not \in I$. Hence $0 \neq p \in S$ and therefore $p \in \Sigma(S)$.\\

Now assume that $p \neq 0$ is a monomial in $\Sigma (S)$.\\
Since $p \in \Sigma (S)$, $p$ is also in $S$. So consider the poset $P$ on the monomials of $S$ defined as above.
Take a maximal chain in $P$ containing $p$ and call the largest element in the chain $m$.\\
Since $m$ is a maximal element of a maximal chain containing $p$, then for all $q \in S$ either $q \leq m$ or $q$ is not comparable to $m$.
But since for all $x_i$, $1 \leq i \leq n$, we have $x_im \geq m$ this implies that $x_im \not \in \Sigma(S)$.
Therefore $x_im \in I$ and hence $x_im = 0 \in S$.
By definition, this means that $m$ is truly isolated in $\Sigma (S)$.\\
Therefore some truly isolated point of $\Sigma (S)$ contains $p$ as a factor.\\

\end{proof}

We also have a nice relationship between containment between two ideals of a ring and the survival complexes generated by those ideals. \\

\begin{proposition}
Consider the survival complexes $\Sigma(S)$ and $\Sigma (T)$ generated by $S = \frac{k[x_{1}, \ldots, x_n]}{I}$ and $T = \frac{k[x_1, \ldots,x_n ]}{J}$ with  $I \subseteq J$. Then $\Sigma(T) \subseteq \Sigma(S)$. This containment is proper if $I \neq J$.
\end{proposition}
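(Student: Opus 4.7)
The plan is to separately verify containment of vertices and of higher-dimensional simplices by translating the defining conditions of the survival complex back to ideal membership, and then to exhibit an explicit ``extra'' vertex of $\Sigma(S)$ to witness strict containment when $I \neq J$.

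For the inclusion $\Sigma(T) \subseteq \Sigma(S)$, I would first observe that a vertex of $\Sigma(T)$ is a nonzero zero-divisor monomial $m$ in $T$, meaning $m \notin J$. Because $I \subseteq J$, we also have $m \notin I$, so $m$ is a nonzero monomial in $S$. Since $S$ has only finitely many monomials, every variable $x_i$ is nilpotent in $S$; consequently $m$, having positive degree, is itself nilpotent and hence a nonzero zero-divisor in $S$, placing $m$ in $\Sigma(S)$. For the higher simplices, if $\{m_1, \dotsc, m_k\}$ spans a simplex of $\Sigma(T)$ then $m_1 \cdots m_k \notin J$, hence $m_1 \cdots m_k \notin I$, so the same tuple survives in $S$ and spans a simplex of $\Sigma(S)$.

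For proper containment when $I \neq J$, the key fact is that a monomial ideal contains a polynomial if and only if it contains each monomial that appears in that polynomial. Thus $I \subsetneq J$ forces the existence of a monomial $m \in J \setminus I$. Such an $m$ is zero in $T$, so it cannot be a vertex of $\Sigma(T)$. If $m \neq 1$, the argument above shows $m$ is a nonzero zero-divisor in $S$, giving $m \in \Sigma(S) \setminus \Sigma(T)$. In the degenerate case $m = 1$, the ideal $J$ is the unit ideal and $\Sigma(T)$ is empty, while $\Sigma(S)$ still contains at least one vertex, for example any variable $x_i$, which is nilpotent in $S$ by the finiteness hypothesis.

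The step I expect to require the most care is the vertex step: passing from ``nonzero in $T$'' to ``actually a vertex (zero-divisor) in $S$.'' This is where the paper's standing assumption of finitely many monomials does real work, since it is what guarantees every nonzero monomial of positive degree is nilpotent and therefore automatically a zero-divisor. Without this, the inclusion of vertex sets would require a separate verification that a zero-divisor relation in $T$ lifts to one in $S$, which is not obviously true on its own.
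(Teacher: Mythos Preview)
Your proof is correct and follows essentially the same route as the paper: both arguments reduce containment and strictness to the implication ``$m\notin J\Rightarrow m\notin I$'' and to the existence of a monomial in $J\setminus I$. You are simply more thorough---explicitly checking higher simplices, justifying why a monomial in $J\setminus I$ must exist, invoking the finiteness hypothesis to certify the zero-divisor condition, and handling the degenerate case $J=(1)$---whereas the paper's proof tacitly relies on its standing conventions to skip these points.
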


\begin{proof}
Choose a point $q \in \Sigma(T)$. Then the monomial associated with $q$ is not in $J$. Hence it is not in $I$. Therefore $q \in \Sigma(S)$.\\ 
Since $ q$ was chosen arbitrarily, this means that $\Sigma(T) \subseteq \Sigma(S)$.\\
Now assume that $I \neq J$. 
Thus there is a monomial in $J$ which is not in $I$, call it $m$. Since $m \in J$ this means that $m \not \in \Sigma(T)$. 
However, $m$ is contained in $k[x_{1}, \ldots, x_n]$ while not being contained in $I$. Thus $m \in \Sigma(S)$.\\
Hence $\Sigma(S) \neq \Sigma(T)$.

\end{proof}

\begin{remark}
The previous proposition implies that every survival complex we consider is a subcomplex of a pure power survival complex due to the fact every ideal $J$ within our domain of inquiry contains the ideal $I$ generated by the pure power generators of $J$.

\end{remark}
\section[Fibre Products, Socles, and More]{ Fibre products, Socles, and More}

\subsection{Fibre Products, Survival Complexes, and Connected Components}

Given two rings $R$ and $S$ where $R = \frac{k[x_1, \ldots, x_n]}{I}$ and $S = \frac{k[y_1, \ldots, y_m]}{J}$ the fibre product $R \circ S \cong \frac{k[x_1, \ldots,x_n,y_1, \ldots, y_m]}{(I,J,x_iy_j)}$ where $x_iy_j$ represents all the pairwise products where $x_i \in \lbrace x_1, \ldots, x_n \rbrace$ and $y_j \in \lbrace y_1, \ldots , y_m \rbrace$ \cite{GIIFP}.

We will now connect fibre products to survival complexes that have more than one nontrivial connected component.

Since every survival complex has at least one isolated point, it follows that every survival complex trivially has at least two connected components.\\

\begin{definition}
 
 A \emph{trivially connected} component is a connected component consisting of only one point.\\
 \end{definition}
 
 \begin{definition}
 
 A \emph{quasi-trivially connected} component is a connected component consisting of a quasi-isolated point of the form $x_t$.\\
 \end{definition}
 
 \begin{remark}
 The survival complex $\Sigma \left(\frac{k[x]}{(x^2)}\right)$ will consist of precisely one point, the monomial $x$. A trivially connected component which is not a quasi-trivially connected component can not be the survival complex of a ring. This distinction will be important later.\\
 \end{remark}

\begin{example} An example of a survival complex with two nontrival connected components and three trivially connected components, none of which are quasi-trivially connected.\\
\end{example}

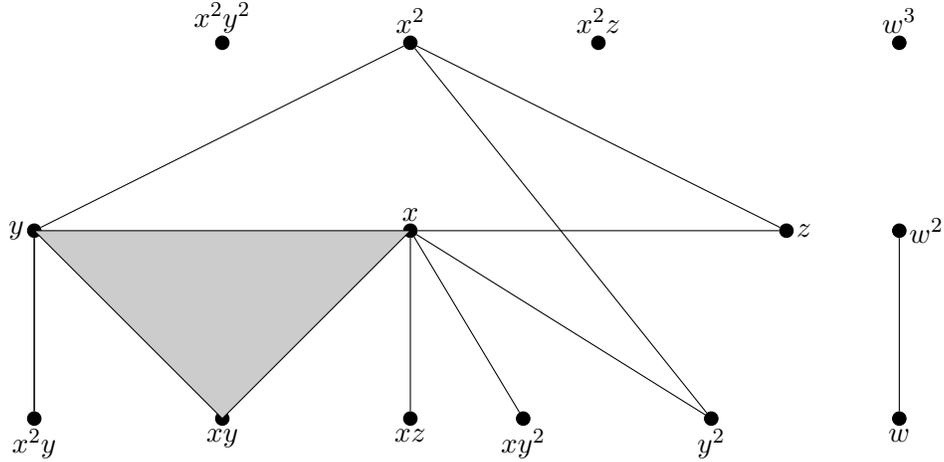
\begin{figure}[h]
\begin{center}

\begin{tikzpicture}
[scale=5, vertices/.style={draw, fill=black, circle, inner sep=1.8pt}]
\node[vertices] (y) at (-1,0) {};
\node[left] at (-1,0) {$y$};
\node[vertices] (x) at (0,0) {};
\node[above] at (0,0) {$x$};
\node[vertices] (z) at (1,0) {};
\node[right] at (1,0) {$z$};
\node[vertices] (xy) at (-0.5,-0.5) {};
\node[below] at (-0.5,-0.5) {$xy$};
\node[vertices] (xz) at (0,-0.5) {};
\node[below] at (0,-0.5) {$xz$};
\node[vertices] (yy) at (0.8,-0.5) {};
\node[below] at (0.8,-0.5) {$y^2$};
\node[vertices] (xx) at (0,0.5) {};
\node[above] at (0,0.5) {$x^2$};
\node[vertices] (xxy) at (-1,-0.5) {};
\node[below] at (-1,-0.5) {$x^2y$};
\node[vertices] (xyy) at (0.3,-0.5) {};
\node[below] at (0.3,-0.5) {$xy^2$};
\node[vertices] (xxyy) at (-0.5,0.5) {};
\node[above] at (-0.5,0.5) {$x^2y^2$};
\node[vertices] (xxz) at (0.5,0.5) {};
\node[above] at (0.5,0.5) {$x^2z$};

\node[vertices] (w) at (1.3,-0.5) {};
\node[below] at (1.3,-0.5) {$w$};
\node[vertices] (ww) at (1.3,0) {};
\node[right] at (1.3,0) {$w^2$};
\node[vertices] (www) at (1.3,0.5) {};
\node[above] at (1.3,0.5) {$w^3$};

\filldraw[fill=black!20, draw=black] (-0.5,-0.5)--(0,0)--(-1,0)--cycle;
\draw (xz)--(x);
\draw (xyy)--(x);
\draw (z)--(x);
\draw (yy)--(x);
\draw (yy)--(xx);
\draw (y)--(xx);
\draw (z)--(xx);
\draw (y)--(xxy);
\draw (y)--(xxy);

\draw (w)--(ww);
\end{tikzpicture}

\caption{Survival Complex of $R = \frac{k[x,y,z,w]}{(x^3,y^3,z^2,w^4,yz,xw,yw,zw)}$}
\end{center}
\end{figure}

\begin{proposition} \label{Components=TIso}
If a survival complex $\Sigma(S)$ contains two nontrivial or quasi-trivially connected components $\mathcal{T}$ and $\mathcal{R}$ then $\mathcal{T}$ and $\mathcal{R}$ must have no variables in common. Furthermore, $\Sigma(S)$ will contain at least two truly isolated points.

\end{proposition}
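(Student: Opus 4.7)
The plan is to first extract a key lemma about variable supports of components, and then derive both claims from it quickly. The lemma is: \emph{if $\mathcal{C}$ is a nontrivial or quasi-trivially connected component of $\Sigma(S)$ and the variable $x_j$ divides some vertex $m \in \mathcal{C}$, then $x_j$ itself is a vertex of $\mathcal{C}$.} To prove it I would split on the type of $\mathcal{C}$. In the quasi-trivial case $\mathcal{C} = \{x_t\}$, and $x_j \mid x_t$ forces $x_j = x_t$, so there is nothing to check. In the nontrivial case, $m$ has at least one neighbor $n$ within $\mathcal{C}$, i.e.\ $mn \neq 0$ in $S$. Writing $m = x_j r$, the monomial $x_j n$ divides $mn$ in $k[x_1, \ldots, x_t]$; since $I$ is a monomial ideal and $mn \notin I$, no divisor of $mn$ can lie in $I$, so $x_j n \neq 0$ in $S$. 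This produces an edge between $x_j$ and $n$, placing $x_j$ in the component of $n$, namely $\mathcal{C}$.

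Granted this lemma, the first assertion is immediate: if any variable $x_j$ appeared as a factor in both $\mathcal{T}$ and $\mathcal{R}$, the lemma would force $x_j \in \mathcal{T} \cap \mathcal{R}$, contradicting distinctness of the components. For the second assertion I would pick any $m_{\mathcal{T}} \in \mathcal{T}$ and $m_{\mathcal{R}} \in \mathcal{R}$ and invoke the earlier proposition to produce truly isolated points $q_{\mathcal{T}}, q_{\mathcal{R}}$ with $m_{\mathcal{T}} \mid q_{\mathcal{T}}$ and $m_{\mathcal{R}} \mid q_{\mathcal{R}}$. The task is to show $q_{\mathcal{T}} \neq q_{\mathcal{R}}$, which I would reduce to showing that every variable dividing $q_{\mathcal{T}}$ lies in the variable support of $\mathcal{T}$ (and symmetrically for $\mathcal{R}$). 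For $x_j \mid q_{\mathcal{T}}$: either $x_j \mid m_{\mathcal{T}}$, in which case the lemma gives $x_j \in \mathcal{T}$, or $x_j \nmid m_{\mathcal{T}}$, in which case $x_j m_{\mathcal{T}}$ divides $q_{\mathcal{T}}$ and is therefore nonzero in $S$, giving an edge between $x_j$ and $m_{\mathcal{T}}$ and again placing $x_j \in \mathcal{T}$. Since $q_{\mathcal{T}}$ and $q_{\mathcal{R}}$ are vertices their variable supports are nonempty, and by the first assertion they are disjoint, so the two truly isolated points are distinct.

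The main obstacle I anticipate is keeping the quasi-trivially connected case on the same footing as the nontrivial one, since in the former $m = x_t$ has no neighbor and the ``divisibility through a neighbor'' step collapses and must be argued separately. The other point worth flagging is the implicit use of the upward closure of monomial ideals under divisibility --- if a monomial $p$ lies outside $I$ then so does every divisor of $p$ --- which is what licenses both the inference $mn \neq 0 \Rightarrow x_j n \neq 0$ in the lemma and the parallel inference $x_j m_{\mathcal{T}} \mid q_{\mathcal{T}} \neq 0 \Rightarrow x_j m_{\mathcal{T}} \neq 0$ in the second assertion.
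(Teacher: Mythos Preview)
Your proof is correct and follows essentially the same approach as the paper: both argue that a variable appearing in a component must itself be a vertex of that component (you package this as an explicit lemma, the paper does it inline with a case split), and both then use Proposition~4 together with disjointness of variable supports to produce two distinct truly isolated points. Your extraction of the lemma and explicit appeal to upward closure of monomial ideals under divisibility make the argument slightly cleaner than the paper's, but the underlying ideas coincide.
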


\begin{proof}

Let $\Sigma(S)$ be a survival complex containing two nontrivial or quasi-trivially connected components, call them $\mathcal{T}$ and $\mathcal{R}$.\\

First assume that one of them is quasi-trivially connected, say $\mathcal{T}$, and that $\mathcal{T}$ and $\mathcal{R}$ both contain the variable $x_i$.
Since $\mathcal{T}$ is a quasi-trivially connected component, $x_i$ must be a quasi-isolated point. But since $\mathcal{R}$ contains the variable $x_i$ as well, then for some monomial $m_1 \in \mathcal{R}$, $x_im_1 \in \mathcal{R}$. Hence there is an edge from $m_1$ to $x_i$ which implies that $x_i$ is not quasi-isolated, a contradiction.\\

Now assume that neither $\mathcal{T}$ or $\mathcal{R}$ is a quasi-trivially connected component and
assume that $\mathcal{T}$ and $\mathcal{R}$ both contain the variable $x_i$. Then for some monomial $m_1 \in \mathcal{R}$, $x_im_1 \in \mathcal{R}$.
Hence there is an edge from $m_1$ to $x_i$.\\ 
Furthermore, for some $m_2 \in \mathcal{T}$, $x_im_2 \in \mathcal{T}$ which implies there is an edge from $m_2$ to $x_i$.\\
Therefore there is a path from $m_1$ to $m_2$ making $\mathcal{T}$ and $\mathcal{R}$ connected, a contradiction.\\
Thus if $\mathcal{T}$ and $\mathcal{R}$ are two nontrivial or quasi-trivially connected components, they must have no variables in common.\\

From Proposition 4 we know that every point in a survival complex must be a factor of a truly isolated point and that every factor of a truly isolated point must be a point in the survival complex.\\
If a truly isolated point contains variables from both $\mathcal{T}$ and $\mathcal{R}$ then there must exist an $m \in \Sigma(S)$ which contains variables from both $\mathcal{T}$ and $\mathcal{R}$. \\
By definition if $m \in \Sigma (S)$ contains both variables from both $\mathcal{T}$ and $\mathcal{R}$ then a subcomplex of $\Sigma (S)$ contains a variable from $\mathcal{T}$ and a variable from $\mathcal{R}$. Hence we are able to create a path from $\mathcal{T}$ to $\mathcal{R}$ using the edges of the subcomplex, a contradiction.\\

Therefore, since every point must be a factor of a truly isolated point and no truly isolated point can contain variables from both $\mathcal{T}$ and $\mathcal{R}$, $\Sigma(S)$ must contain at least two truly isolated points.\\

\end{proof}

\begin{proposition} \label{Components-pairwise}
The survival complex $\Sigma(S)$ of $S = \frac{k[x_1, \ldots, x_n]}{I}$ will contain $m$ nontrivial or quasi-trivially connected components if and only if $I$ contains all pairwise products of the variables in the distinct components.
\end{proposition}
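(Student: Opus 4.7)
The plan is to prove both directions of the biconditional using Proposition~4 (every vertex of $\Sigma(S)$ is a factor of some truly isolated point) together with Proposition~\ref{Components=TIso} (distinct nontrivial-or-quasi-trivial components have disjoint variable sets, and---as its proof also shows---every truly isolated point has its variables drawn from a single such component).

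For the forward direction, let $C_1, \ldots, C_m$ be the $m$ nontrivial-or-quasi-trivial components of $\Sigma(S)$, with variable sets $V_1, \ldots, V_m$ (disjoint by Proposition~\ref{Components=TIso}). Given $x_i \in V_k$ and $x_j \in V_l$ with $k \neq l$, I assume for contradiction that $x_i x_j \notin I$. Then $x_i x_j$ is a nonzero monomial in $S$, hence a vertex of $\Sigma(S)$, and by Proposition~4 it is a factor of some truly isolated point $q$. So both $x_i$ and $x_j$ are factors of $q$. But the closing argument of the proof of Proposition~\ref{Components=TIso} shows that the variables of any truly isolated point must all lie within a single $V_p$, contradicting $x_i \in V_k$ and $x_j \in V_l$ with $k \neq l$.

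For the backward direction, suppose the variables admit a partition $V_1, \ldots, V_m$ (possibly alongside variables that are truly isolated in $S$) such that every cross-product $x_i x_j$ with $x_i \in V_k$, $x_j \in V_l$, $k \neq l$, lies in $I$. Any vertex $u$ of $\Sigma(S)$ drawing variables from two different $V_k$ would be divisible by such a cross-product and hence lie in $I$, a contradiction; so the vertex set of $\Sigma(S)$ splits as $G_1 \sqcup \cdots \sqcup G_m$ according to which $V_k$ contributes each vertex's variables. Moreover, the product of any monomial in $G_k$ with any monomial in $G_l$ ($k \neq l$) is divisible by a forbidden cross-product and therefore vanishes in $S$, so no edge of $\Sigma(S)$ crosses between $G_k$ and $G_l$. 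Each $G_k$ is thus a union of connected components of $\Sigma(S)$, yielding the $m$ claimed nontrivial-or-quasi-trivial components.

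The main obstacle lies in the backward direction: one must confirm that each piece $G_k$ contributes exactly one nontrivial-or-quasi-trivial component rather than several, which is a mild bookkeeping issue resolved by taking the partition to match the actual component structure. The conceptually decisive step is the forward direction, where Proposition~4 converts the existence of a cross-product $x_i x_j \notin I$ into the existence of a truly isolated point whose variables span two components---directly contradicting Proposition~\ref{Components=TIso}.
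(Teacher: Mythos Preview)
Your argument is correct and, in the direction ``partition with cross-products in $I$ $\Rightarrow$ $m$ components,'' essentially identical to the paper's (including the same mild bookkeeping gap you flag about each block yielding exactly one component, which the paper does not address either). In the other direction you take a slightly more roundabout path than the paper: you invoke Proposition~4 to push the hypothetical surviving product $x_ix_j$ up to a truly isolated point and then appeal to the closing argument of Proposition~\ref{Components=TIso}. The paper instead observes directly that if $x_ix_j\notin I$ then $x_i\cdot x_j\neq 0$ in $S$, so $\{x_i,x_j\}$ is itself an edge of $\Sigma(S)$, immediately contradicting that $x_i$ and $x_j$ lie in distinct components. Your detour through truly isolated points is valid but unnecessary here; the one-line edge observation already does the job.
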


\begin{proof}
Let $\Sigma(S)$ be the survival complex of $S= \frac{k[x_1, \ldots, x_n]}{I}$.\\
Assume that the variables of $S$ can be partitioned into $m$ sets such that for any two sets, the pairwise products of the variables are in $I$.
Let two such variable sets be $\lbrace x_1, \ldots, x_p \rbrace = \mathcal{T}$ and  $ \lbrace x_q, \ldots, x_l \rbrace = \mathcal{R}$.\\
 Then for any $x_i \in \mathcal{T}$ and $x_j \in \mathcal{R}$ no point in the survival complex may contain the factor $x_ix_j$. Thus there are no edges connecting $x_i$ and $x_j$, which means that the monomials generated from $\mathcal{T}$ and $\mathcal{R}$ form two disjoint connected components in $\Sigma(S)$.\\
 But since $\mathcal{T}$ and $\mathcal{R}$ were chosen arbitrarily, this implies that the variables in every such set forms a connected component in $\Sigma (S)$.\\
 Therefore $\Sigma(S)$ contains $m$ nontrivial or quasi-trivially connected components.\\

Now assume that $\Sigma (S)$ has $m$ nontrivial or quasi-trivially connected components, 
two of which are denoted by $\mathcal{T}$ and $\mathcal{R}$.\\
By Proposition~\ref{Components=TIso}, $\mathcal{T}$ and $\mathcal{R}$ have no variables in common implying that no variable in $\mathcal{R}$ has an edge leading to any variable in $\mathcal{T}$. Since each edge corresponds to a point in $\Sigma(S)$, this implies that all pairwise products $x_ix_j$ where $x_i \in \mathcal{R}$ and $x_j \in \mathcal{T}$ must be in the ideal.\\
 Therefore $I$ contains all pairwise products of the variables in the distinct components.\\
\end{proof}

\begin{remark}\label{Fibre-induct}
The reader should note that by using simple induction on the fact that the fibre product of $R = \frac{k[x_1, \ldots, x_n]}{I}$ and $S = \frac{k[y_1, \ldots, y_m]}{J}$ is $R \circ S =\frac{k[x_1, \ldots,x_n,y_1, \ldots, y_m]}{(I,J,x_iy_j)}$, one sees that for rings of the form $A_t = \frac{k[x_{(t,1)} , \ldots, x_{(t,z_t)}]}{I_t}$, $A_1 \circ A_2 \circ \cdots \circ A_p \cong \frac{k[x_{(1,1)} , \ldots, x_{(1,z_1)}, \ldots,x_{(p,1)} , \ldots, x_{(p,z_p)} ]}{(I_1,I_2, \ldots, I_p, x_{(j,v)}x_{(d,b)})}$ where $x_{(j,v)}x_{(d,b)}$ means all the pairwise products of the variables from $A_1, \ldots, A_p$ where $j \neq d$.\\
\end{remark}

\begin{proposition} \label{Components-fibre}

Let $\Sigma (S)$ be the survival complex of $S = \frac{k[x_1, \ldots, x_n]}{I}$. If $\Sigma (S)$ contains $m$ nontrivial or quasi-trivially connected components, then $S$ can be written as the fibre product of $m$ rings, $S = R_1 \circ \cdots \circ R_m$ and $\Sigma (S)$ is the disjoint union of $\Sigma (R_1), \ldots, \Sigma (R_m)$.

\end{proposition}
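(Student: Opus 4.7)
My plan is to use Propositions~\ref{Components=TIso} and \ref{Components-pairwise} to partition the variables according to components, then realize $S$ as a quotient that matches the fibre product description from Remark~\ref{Fibre-induct}. First I would let $\mathcal{C}_1, \ldots, \mathcal{C}_m$ denote the $m$ nontrivial or quasi-trivially connected components of $\Sigma(S)$, and let $V_j$ be the set of variables that appear in any monomial of $\mathcal{C}_j$. By Proposition~\ref{Components=TIso} the sets $V_1, \ldots, V_m$ are pairwise disjoint, and by Proposition~\ref{Components-pairwise} every pairwise product $x_i x_k$ with $x_i \in V_j$ and $x_k \in V_l$ ($j \neq l$) lies in $I$. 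Any remaining variables of $S$ must themselves be truly isolated singleton points of $\Sigma(S)$, so each is killed by every other variable; I would simply assign each such variable arbitrarily (say, to $V_1$), so that $V_1, \ldots, V_m$ partition the full variable set $\{x_1, \ldots, x_n\}$.

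Next I would define $R_j := k[V_j]/I_j$, where $I_j$ is the ideal generated by all monomial generators of $I$ that involve only variables from $V_j$. The key verification is that $S \cong R_1 \circ \cdots \circ R_m$. Using the presentation from Remark~\ref{Fibre-induct}, this amounts to showing that $I$ equals the ideal $I' := (I_1, \ldots, I_m, \text{pairwise cross-variable products between distinct } V_j)$. The containment $I' \subseteq I$ is immediate: the $I_j$ are defined as subideals of $I$, and the cross products are in $I$ by Proposition~\ref{Components-pairwise} together with the observation that any reassigned truly isolated variable kills every variable. For the reverse containment, since $I$ is a monomial ideal, I take any monomial generator $g$ of $I$; if $g$ uses variables from only one $V_j$ then $g \in I_j$, while if $g$ uses variables from two distinct $V_j$ and $V_l$ then $g$ is a multiple of a cross product $x_i x_k$ already in $I'$.

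Finally, with $S \cong R_1 \circ \cdots \circ R_m$ established, I would argue $\Sigma(S)$ is the disjoint union of the $\Sigma(R_j)$ by classifying the nonzero monomials of the fibre product: any monomial in $S$ that involves variables from two distinct $V_j$'s contains a cross product as a factor and is therefore zero, so every vertex of $\Sigma(S)$ is a nonzero monomial supported in exactly one $V_j$, hence a vertex of $\Sigma(R_j)$, and conversely. Edges (simplices) likewise cannot cross between different $V_j$'s because that would require the product of vertices from different components to survive, contradicting the cross-product relations. The main obstacle I anticipate is the bookkeeping for the leftover truly isolated variables: one must check that reassigning each to a single $V_j$ is consistent both with the ideal-generation argument in the middle step and with the disjoint-union conclusion, but in each case it works because a truly isolated variable multiplies every other element to zero, so placing it in any $R_j$ leaves it truly isolated inside $\Sigma(R_j)$ and does not merge components.
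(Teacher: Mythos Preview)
Your proposal is correct and follows essentially the same strategy as the paper: partition the variables by component, use Propositions~\ref{Components=TIso} and~\ref{Components-pairwise} to see that the generators of $I$ split into the $I_j$ together with the cross-variable products, invoke the fibre-product presentation from Remark~\ref{Fibre-induct}, and then check that nonzero monomials (and hence simplices) of $S$ are exactly those of the individual $R_j$.

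The one place your argument differs from the paper is worth highlighting. The paper treats the case $m=2$ explicitly and then passes to general $m$, and in doing so it tacitly assumes that every variable lies in one of the $m$ components $\mathcal{U},\mathcal{V},\ldots$; its sentence ``the generators of $I$ can be broken down into three sets'' only holds under that assumption. You noticed that a variable $x_i$ with $x_i^2\in I$ and $x_ix_j\in I$ for all $j$ is truly isolated rather than quasi-isolated, hence sits in a trivially connected component not counted among the $m$. Your device of absorbing such variables into $V_1$ is a clean fix: because each such variable annihilates everything, all the required cross products are already in $I$, the decomposition $I=I'$ still goes through, and the variable remains a truly isolated point inside $\Sigma(R_1)$, so the disjoint-union statement is undisturbed. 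In this respect your write-up is actually a bit more careful than the paper's.
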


\begin{proof}

The case for $m =1$ is trivial.\\

Assume that $\Sigma (S)$ has $m =2$ nontrivial or quasi-trivially connected components $\mathcal{U}$ and $\mathcal{V}$.\\
Then $\mathcal{U}$ and $\mathcal{V}$ contain no variables in common. Furthermore, from Proposition~\ref{Components-pairwise} we know that $I$ must contain all pairwise products of variables in $\mathcal{U}$ and $\mathcal{V}$.\\ 
Hence the generators of $I$ can be broken down into three sets: the generators which contain only variables in $\mathcal{U}$, the generators which contain only variables of $\mathcal{V}$, and the pairwise products of variables in $\mathcal{U}$ and $\mathcal{V}$.\\

Therefore we can write $ S =\frac{k[x_1, \ldots,x_a,x_{a+1}, \ldots, x_n]}{(Q,J,x_iy_j)}$ where $\lbrace x_1, \ldots, x_a \rbrace$ are the variables in $\mathcal{U}$, $\lbrace x_{a+1}, \ldots, x_n \rbrace$ are the variables in $\mathcal{V}$, $Q$ is the ideal generated by the generators of $I$ which only contain the variables $x_i$, $1 \leq i \leq a$, and $J$ is the ideal generated by the generators of $I$  which only contain the variables $x_j$, $a+1 \leq j \leq n$.\\
Thus we can write $S = \frac{k[x_1, \ldots, x_a]}{Q} \circ \frac{k[x_{a+1}, \ldots, x_n]}{J}$ by \cite{GIIFP}.\\

Now let $R = \frac{k[x_1, \ldots, x_a]}{Q}$ and $T = \frac{k[x_{a+1}, \ldots, x_n]}{J}$ and consider $\Sigma (R)$ and $\Sigma (T)$.\\
Take an arbitrary $p \in R$. Then $p \in S$ and $p$ is generated only by variables from $\mathcal{U}$. Hence $p$ is either in the connected component generated by the variables in $\mathcal{U}$ or $p$ is an isolated point generated by the variables in $\mathcal{U}$.\\
Now take $0 \neq p \in S$ which is generated by the variables in $\mathcal{U}$.\\
Then by the definition of $R$, $0 \neq p \in R$ which implies that $p \in \Sigma (R)$.\\
Thus there is a correspondence between $\Sigma (R)$ and the subcomplex of $\Sigma(S)$ generated by the variables of $\mathcal{U}$.\\
Following the proof structure in the above paragraph, one also sees that there is a correspondence between $\Sigma (T)$ and the subcomplex of $\Sigma(S)$ generated by the the variables of $\mathcal{V}$.\\
Since every point in $\Sigma (S)$ is either generated by the variables of $\mathcal{U}$ or the variables of $\mathcal{V}$, $\Sigma(S)$ is the disjoint union of the subcomplex generated by the variables of $\mathcal{U}$ and the subcomplex generated by the variables of $\mathcal{V}$.
So by the above correspondence between $\mathcal{U}$ and $\Sigma (R)$ and the correspondence between $\mathcal{V}$ and $\Sigma (T)$ we get that $\Sigma (S)$ is a disjoint union of $\Sigma (R)$ and $\Sigma (T)$.

Therefore we can write $S = R \circ T$ for some rings $R$ and $T$ and $\Sigma (S)$ is the disjoint union of $\Sigma(R)$ and $\Sigma (T)$.\\


Now assume that $S$ has $m >2$ components.\\
By Proposition~\ref{Components=TIso} we know that no two nontrivial or quasi-trivially connected component have any variables in common. Therefore we can write $S = \frac{k[x_{(1,1)}, \ldots, x_{(1,f_1)}, \ldots, x_{(m,1)}, \ldots, x_{(m,f_m)}]}{I}$ where $\lbrace x_{(j,1)} \ldots, x_{(j,f_j)} \rbrace$ are the variables in the jth nontrivial or quasi-trivially connected component of $\Sigma (S)$.\\

By Proposition~\ref{Components-pairwise}, we can then write the ideal $I$ as $I = (J_1,\ldots,J_m,x_{(j,p)}x_{(d,b)})$ where $J_i$ is the ideal generated by the generators of $I$ which only contain the variables in the ith component and $x_{(j,p)}x_{(d,b)}$ indicates all the pairwise products of of variables in different components.\\
 
Therefore $S = \frac{k[x_{(1,1)}, \ldots, x_{(1,f_1)}]}{J_1} \circ \cdots \circ \frac{k[x_{(m,1)}, \ldots, x_{(m,f_m)}]}{J_m}$ as in Remark~\ref{Fibre-induct}. By following the argument presented above, we know that for each $i$ where $1 \leq i \leq m$, $\Sigma \left( \frac{k[x_{(i,1)}, \ldots, x_{(i,f_i)}]}{J_i} \right)$ corresponds to a subcomplex of $\Sigma (S)$ and $\Sigma (S)$ is the disjoint union of all the $\Sigma \left( \frac{k[x_{(i,1)}, \ldots, x_{(i,f_i)}]}{J_i} \right)$.

\end{proof}

\subsection{Socle and Survival Complex}
If $R$ is a local ring with maximal ideal $m$, the socle of $R$ is defined as $\text{soc} R = \lbrace x \in R : xm = 0 \rbrace$ (see~\cite[Def. 1.2.18]{C-MRings}). On the other hand, if $R$ is a standard graded ring with homogeneous maximal ideal $P$ we define the graded socle of $R$ as ${ }^*\text{soc}_R R = \lbrace x \in R : Px = 0 \rbrace $ (see~\cite{LCMIDF}).
Since we are only considering rings of the form $R = \frac{k[x_1, \ldots, x_n]}{I}$, where $I$ is an ideal that contains powers of all the variables, our $R$ is in fact a local ring with only one maximal ideal, $(x_1, \ldots,x_n)$. Hence the socle of $R$ is the same as the graded socle of $R$ which allows us to refer to the socle of $R$ while using the definition of the graded socle.\\
Furthermore, the socle forms a vector space over $R/m$, the basis of which is the set of monomials of $R$ contained in the socle.\\



This in turn allows us to describe the number of truly isolated points of a survival complex $\Sigma(S)$ in regards to the dimension of the socle.\\

\begin{proposition}
Let $\Sigma (R)$ be the survival complex of $R = \frac{k[x_1, \ldots, x_n]}{I}$. Then the truly isolated points of $\Sigma(R)$ are exactly those monomials which generate the socle of $R$.
\end{proposition}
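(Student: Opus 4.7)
The plan is to translate the geometric condition of true isolation into the algebraic condition defining the socle, and then verify that the monomials satisfying this condition form a $k$-basis of the socle. Since $I$ contains a power of each variable, $R$ is a local Artinian ring with unique maximal ideal $m = (x_1,\ldots,x_n)$, and by the discussion preceding the proposition the socle equals the graded socle $\mathrm{soc}(R) = \{r \in R : m\cdot r = 0\}$, which is a $k$-vector space with a basis of monomials.

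First I would establish the key equivalence at the level of monomials: a nonzero monomial $p \in R$ is truly isolated if and only if $p \in \mathrm{soc}(R)$. One direction is immediate, since each $x_i \in m$. For the other, if $x_i p = 0$ for every $i$, then for any $r = \sum r_i x_i \in m$ we get $rp = \sum r_i(x_i p) = 0$, so $p \in \mathrm{soc}(R)$. This uses only the Remark following the definition of truly isolated together with the fact that $m$ is generated by $x_1,\ldots,x_n$.

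Next I would argue that the socle is actually spanned by its truly isolated monomials. Take an arbitrary $f \in \mathrm{soc}(R)$ and write $f = \sum_j c_j m_j$ with $c_j \in k^\times$ and the $m_j$ pairwise distinct monomials in $R$. For each variable $x_i$, we have $0 = x_i f = \sum_j c_j (x_i m_j)$ in $R$. In a monomial ring each $x_i m_j$ is either $0$ or a single monomial of $R$; moreover, if $x_i m_j = x_i m_k \neq 0$ in $R$ then as monomials in $A$ we have $x_i m_j = x_i m_k$, hence $m_j = m_k$. Therefore the nonzero terms in $\sum_j c_j (x_i m_j)$ are linearly independent monomials, and the equation forces $x_i m_j = 0$ for every $j$. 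Running this for each $i$ shows that each $m_j$ is truly isolated. Combined with the previous paragraph, the truly isolated monomials of $\Sigma(R)$ form a $k$-basis of $\mathrm{soc}(R)$, which is exactly what the proposition asserts.

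The main obstacle I anticipate is the second step: showing that a generic element of the socle, which a priori is only known to be some $k$-linear combination of monomials in $R$, must in fact be a combination of truly isolated ones. The crux is the observation that multiplication by a variable either annihilates a monomial or carries it to a distinct monomial, so no cancellation can occur among the nonzero images; everything else is bookkeeping around the local structure of $R$ and the standard identification between the socle and its graded version.
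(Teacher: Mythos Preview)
Your proof is correct and follows essentially the same route as the paper: both establish the equivalence, for a nonzero monomial, between being truly isolated and lying in the socle, and then conclude that these monomials form a $k$-basis of $\mathrm{soc}(R)$. The only difference is that the paper simply invokes the fact (stated just before the proposition) that the socle admits a monomial basis, whereas you supply an explicit argument for it via the no-cancellation observation; this makes your version slightly more self-contained but not genuinely different in strategy.
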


\begin{proof}
The monomials of $\Sigma (R)$ which are truly isolated are precisely the monomials $m$ of $R$ where $mx_j = 0$ for all $1 \leq j \leq n$.\\
The homogeneous maximal ideal of $R$ is the ideal $P = (x_1, \ldots, x_n)$.\\
So for every truly isolated monomial $m$, $mP = 0$. Therefore the truly isolated points of $\Sigma (R)$ correspond to monomials in the socle of $R$.\\
Now assume that some other monomial $q \in R$ is also an element of the socle of $R$. Then $q \neq 0$ which would imply that $q \in \Sigma(R)$.\\
But for all $x_i$, $1 \leq i \leq n$, $qx_i = 0$. Hence $q$ is truly isolated in $\Sigma (R)$.\\
So all monomials in the socle of $R$ are truly isolated points in $\Sigma (R)$.\\
However, the monomials in the socle of $R$ generate the the socle of $R$ as a vector space.\\
Since the truly isolated points of $\Sigma (R)$ are all the monomials of $R$ which are in the socle, the truly isolated points of $\Sigma (R)$ generate the socle of $R$.\\

\end{proof}

\begin{remark} \label{NumTIsopoints-dimsoc}
The minimal number of monomials needed to generate the socle of such a ring $R$ is the dimension of the socle of $R$. Since the the truly isolated points minimally generate the socle, the number of truly isolated points in $\Sigma(R)$ is the dimension of the socle of $R$.\\
\end{remark}

It is a known characterization that an Artinian local ring is Gorenstein if and only if it has socle dimension one (see \cite[Theorem 3.2.10]{C-MRings}).
 In fact, there is a direct correlation between pure power survival complexes and Gorenstein rings.\\

\begin{theorem} \label{Gore-oneSoc}
$S = \frac{k[x_1, \ldots, x_n]}{I}$ is a Gorenstein ring if and only if the survival complex $\Sigma(S)$ is a pure power complex.
\end{theorem}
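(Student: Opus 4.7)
The plan is to combine two earlier ingredients: the proposition that any pure power survival complex contains exactly one truly isolated point, and the proposition (together with Remark~\ref{NumTIsopoints-dimsoc}) identifying truly isolated points with a vector-space basis of the socle. Then the Gorenstein criterion that socle dimension equals one becomes equivalent to having exactly one truly isolated point, and the real work is to upgrade ``exactly one truly isolated point'' into ``$I$ is generated purely by powers of the variables.''

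For the easy direction, suppose $\Sigma(S)$ is a pure power complex. The proposition on pure power complexes gives exactly one truly isolated point in $\Sigma(S)$, so by Remark~\ref{NumTIsopoints-dimsoc} the socle of $S$ has dimension one. By the cited characterization (\cite[Theorem 3.2.10]{C-MRings}), the Artinian local ring $S$ is Gorenstein.

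For the harder direction, suppose $S$ is Gorenstein. Then $\dim \mathrm{soc}(S) = 1$, so by Remark~\ref{NumTIsopoints-dimsoc} there is exactly one truly isolated point $q \in \Sigma(S)$, say $q = x_1^{a_1}\cdots x_n^{a_n}$ with each $a_i \ge 0$. Invoking Proposition~4 (the one showing every nonzero monomial is a factor of some truly isolated point), every nonzero monomial of $S$ must be a factor of $q$. Conversely, every factor of $q$ is nonzero in $S$. Thus the nonzero monomials of $S$ are exactly the divisors of $q$, i.e.\ the monomials $x_1^{b_1}\cdots x_n^{b_n}$ with $0 \le b_i \le a_i$ for every $i$. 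Equivalently, a monomial lies in $I$ if and only if some exponent exceeds the corresponding $a_i$, i.e.\ it is divisible by some $x_i^{a_i+1}$. Since $I$ is a monomial ideal, this forces
\[
  I \;=\; \bigl(x_1^{a_1+1},\,x_2^{a_2+1},\,\ldots,\,x_n^{a_n+1}\bigr),
\]
which is a pure power ideal (with the convention that $a_i=0$ simply means $x_i \in I$, still a pure power generator). Hence $\Sigma(S)$ is a pure power survival complex.

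The main obstacle is the forward direction: it is not immediate that socle dimension one should dictate the shape of the generators of $I$. The linchpin is Proposition~4, which turns a statement about a single distinguished socle element into a uniform divisibility statement on every monomial of $S$, and from there the pure power structure of $I$ falls out mechanically. A minor bookkeeping point is to handle variables with $a_i = 0$ (those $x_i$ already in $I$), but this fits into the same pure power format without difficulty.
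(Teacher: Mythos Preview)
Your proof is correct and follows essentially the same route as the paper: both directions hinge on Proposition~2, Remark~\ref{NumTIsopoints-dimsoc}, and Proposition~4, with the Gorenstein $\Leftrightarrow$ socle-dimension-one criterion as the bridge. Your reverse direction is in fact a bit cleaner than the paper's: the paper inserts an unnecessary observation about nontrivial connected components before reaching the same divisibility conclusion, whereas you go directly from ``every nonzero monomial divides the unique truly isolated point $q$'' to the explicit description $I = (x_1^{a_1+1},\ldots,x_n^{a_n+1})$.
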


\begin{proof}
Assume that $\Sigma (S)$ is a pure power survival complex.\\

In Proposition 2 we showed that all pure power complexes have exactly one truly isolated point.  Since the number of truly isolated points correspond to the dimension of the socle of a ring, this implies that every pure power survival complex comes from a Gorenstein ring. \\

Assume that $S =\frac{ k[x_1, \ldots,x_n]}{I} $ is Gorenstein.
Thus it has socle dimension one and therefore $\Sigma (S)$ must have only one truly isolated point (see Remark~\ref{NumTIsopoints-dimsoc}).\\

Because $\Sigma (S)$ has only one truly isolated point $\Sigma(S)$ must have only one nontrivial connected component. If $\Sigma(S)$ had two nontrivial isolated components each connected component would have its own isolated point by Proposition \ref{Components=TIso}.\\
Since every point must appear as a factor in the isolated point by Proposition 4, the truly isolated point of $\Sigma(S)$ must be of the form $w = x_{1}^{m_1} \cdots x_{n}^{m_n}$.\\

Then for any $x_{p}^k$ where $x_{p}^{k+1}$ is $I$, $x_{p}^k$ must connect to at least one other distinct point. Therefore, $x_{p}^k$ must be in a factor of a truly isolated point as well. But since $w$ is the only truly isolated point, it must contain $x_{p}^k$ as a factor. Therefore, $w$ must contain as a factor all the variables of $\Sigma(S)$ and each variable must be able to divide $w$ $l$ times where said variable to the $l+1$st power is in the ideal.\\

Therefore $I$ must be generated by monomials of the form $x_{p}^l$. If the generators of $I$ contained any monomials not divisible by some $x_{p}^l$, then that monomial would be a factor of the truly isolated point of $\Sigma(S)$, a contradiction.\\

Hence $\Sigma(S)$ must be a pure power complex.\\
\end{proof}

\begin{remark}
In his proposition in \cite{NAGADM}, Beintema proved that a ring $R = \frac{k[x_1, \ldots, x_n] }{I}$, where $I$ is a monomial ideal of height $n$, is Gorenstein if and only if $I$ is a complete intersection. Using the survival complex structure we have developed thus far, we offer a different proof of Beintema's theorem below.

\end{remark}

\begin{theorem}[Beintema \cite{NAGADM}]

Let $A = k\left[X_1, \ldots, X_n\right]$, and let $I \subset A$ be an ideal of height $n$ which is generated by monomials. Then $A/I$ is Gorenstein if and only if $I $ is a complete intersection.\\

\end{theorem}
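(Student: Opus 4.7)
The plan is to reduce Beintema's theorem to Theorem~\ref{Gore-oneSoc}, which identifies Gorenstein Artinian monomial quotients $A/I$ as precisely those for which $I$ is a pure power ideal $(X_1^{a_1}, \ldots, X_n^{a_n})$. Thus it suffices to show that for a monomial ideal $I$ of height $n$, being a complete intersection is equivalent to being a pure power ideal. One direction is immediate: a pure power ideal has exactly $n$ minimal generators and height $n$, so it is a complete intersection; combined with Theorem~\ref{Gore-oneSoc}, this gives that if $A/I$ is Gorenstein then $I$ is a pure power ideal and therefore a complete intersection.

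For the converse, I would first record the observation that a monomial ideal $I$ of height $n$ in $A$ must contain a pure power of every variable: the minimal primes of a monomial ideal are themselves monomial primes, so if no power of $X_i$ lay in $I$ then $I$ would be contained in $(X_1, \ldots, \widehat{X_i}, \ldots, X_n)$, a prime of height $n-1$, contradicting $\operatorname{height}(I) = n$. Let $X_i^{b_i}$ denote the lowest pure power of $X_i$ in $I$; any monomial properly dividing $X_i^{b_i}$ is itself a pure power of $X_i$ of smaller exponent and hence not in $I$ by minimality of $b_i$, so each $X_i^{b_i}$ is forced into the minimal monomial generating set of $I$.

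Now suppose $I$ is a monomial complete intersection of height $n$. Since $A$ is regular, hence Cohen--Macaulay, being a complete intersection of height $n$ is equivalent to being minimally generated by exactly $n$ elements. The $n$ pure powers $X_1^{b_1}, \ldots, X_n^{b_n}$ identified above must therefore comprise the full minimal generating set: $I = (X_1^{b_1}, \ldots, X_n^{b_n})$ is a pure power ideal, and Theorem~\ref{Gore-oneSoc} delivers that $A/I$ is Gorenstein.

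The principal hurdle is the minimal-primes observation, establishing that a monomial ideal of height $n$ contains a pure power of each variable; once this height-theoretic fact is in hand, the remainder of the proof is a clean translation through Theorem~\ref{Gore-oneSoc}.
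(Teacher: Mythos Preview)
Your proposal is correct and follows the same overall plan as the paper: reduce Beintema's statement to Theorem~\ref{Gore-oneSoc} by identifying ``complete intersection monomial ideal of height $n$'' with ``pure power ideal.'' In particular, the forward direction (Gorenstein $\Rightarrow$ pure power $\Rightarrow$ complete intersection) is exactly what the paper does.

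Where you differ is in the converse. The paper's proof is quite terse: after noting that a pure power ideal is a complete intersection, it simply asserts that ``the above result follows from Theorem~\ref{Gore-oneSoc},'' apparently leaning on the classical fact that any Artinian complete intersection is Gorenstein. You instead give a self-contained combinatorial argument that a height-$n$ monomial complete intersection must already be a pure power ideal (via the observation that height $n$ forces a power of each variable into $I$, and these $n$ pure powers then exhaust the minimal generating set), and only then invoke Theorem~\ref{Gore-oneSoc}. Your route avoids appealing to the general ``complete intersection $\Rightarrow$ Gorenstein'' theorem and keeps the argument entirely within the monomial framework the paper has developed; the paper's route is shorter but outsources the converse to a standard result it does not state explicitly.
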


\begin{proof}
First, note that an ideal of height $n$ is a complete intersection if it can be generated by precisely $n$ elements. 

In any polynomial ring $k[x_1, \ldots, x_n]$ an ideal $I = (x_{1}^{a_1}, \ldots, x_{n}^{a_n})$ is of height $n$ and hence a complete intersection.
 Since all pure power survival complexes are generated by rings $R = \frac{k[x_1, \ldots, x_n]}{(x_{1}^{a_1}, \ldots, x_{n}^{a_n})}$, the above result follows from Theorem~\ref{Gore-oneSoc}.\\
\end{proof}


\subsection{More Partial Characterizations of Survival Complexes}

One research question considered was as follows. Given the survival complex $\Sigma (S)$ of $S = \frac{k[x_1, \ldots, x_n]}{I}$, if one assume some properties of the isolated points of $\Sigma(S)$, can anything be said about $\Sigma(S)$ or $S$?\\
Some results obtained are presented below.\\ 

\begin{lemma} \label{sets-fiber}
Let $\Sigma(S)$ be a survival complex with $m$ isolated points which can be separated into $t$ sets $\mathcal{H}_1, \ldots, \mathcal{H}_t$ such that for all $i$ and $j$
if $h_j \in \mathcal{H}_j$ and $h_i \in \mathcal{H}_i$ then $h_j$ and $h_i$ share no variables in common. Then $S$ can written as the fibre product of $t$ rings, $S = R_1 \circ \cdots \circ R_t$ and $\Sigma (S)$ is the disjoint union of $\Sigma (R_1), \ldots, \Sigma (R_t)$.
\end{lemma}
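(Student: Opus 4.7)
The plan is to mirror the proof of Proposition~\ref{Components-fibre} once I have translated the hypothesis on isolated points into a partition of the variables of $S$ with pairwise products across blocks landing in $I$.

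For each $i$, let $V_i$ denote the set of variables appearing in some element of $\mathcal{H}_i$. The hypothesis that elements of distinct $\mathcal{H}_i$ and $\mathcal{H}_j$ share no variables gives $V_i \cap V_j = \emptyset$ for $i \neq j$. Proposition 4 lets me assign every vertex of $\Sigma(S)$ to a unique $V_\ell$: any vertex is a factor of some truly isolated point $q$, and $q$ lies in some $\mathcal{H}_\ell$, so the variables of that vertex all sit in $V_\ell$. In particular every variable $x_k$ that is nonzero in $S$ lies in exactly one $V_\ell$; variables with $x_k \in I$ play no role in $\Sigma(S)$ and may be discarded, since $k[x_k]/(x_k) \cong k$ is neutral under fibre products.

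Next I would verify that $x_a x_b = 0$ in $S$ whenever $x_a \in V_i$ and $x_b \in V_j$ with $i \neq j$. If $x_a x_b \neq 0$, Proposition 4 would produce a truly isolated point $q \in \mathcal{H}_\ell$ divisible by $x_a x_b$, forcing both $x_a$ and $x_b$ into $V_\ell$ and contradicting the disjointness established above. Letting $I_\ell$ denote the ideal of $k[V_\ell]$ generated by those monomial generators of $I$ whose support is contained in $V_\ell$, any generator of $I$ that uses variables from two different $V$-blocks is a multiple of a cross pairwise product; hence
\[
I = (I_1, \ldots, I_t, \{x_a x_b : x_a \in V_i, \, x_b \in V_j, \, i \neq j\}).
\]
Remark~\ref{Fibre-induct} then gives $S = R_1 \circ \cdots \circ R_t$ where $R_\ell := k[V_\ell]/I_\ell$.

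For the disjoint union of complexes I would imitate the second half of Proposition~\ref{Components-fibre}. Each nonzero monomial of $S$ has support inside a single $V_\ell$ and is therefore the image of a nonzero monomial of $R_\ell$, and conversely each nonzero monomial of $R_\ell$ remains nonzero in $S$ because membership in $I$ would force it into $I_\ell$. Since all cross products vanish, no edge of $\Sigma(S)$ connects a $V_\ell$-monomial to a $V_m$-monomial for $\ell \neq m$, giving $\Sigma(S) = \Sigma(R_1) \sqcup \cdots \sqcup \Sigma(R_t)$. The main obstacle is making sure the variable partition is well defined for \emph{every} nonzero monomial of $S$, not just the isolated ones, and that no $V_\ell$-monomial gets absorbed into a different block; both points are handled by Proposition 4 together with the fibre product formula of Remark~\ref{Fibre-induct}.
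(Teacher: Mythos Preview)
Your proof is correct, but it takes a different route from the paper's. The paper argues at the level of connected components: it observes that the subcomplexes supported on the variable sets of distinct $\mathcal{H}_i$ cannot be joined (since any edge would produce a monomial mixing variables from two blocks, hence a truly isolated point violating the hypothesis), concludes that $\Sigma(S)$ has at least $t$ nontrivial or quasi-trivially connected components, and then simply invokes Proposition~\ref{Components-fibre} as a black box to obtain both the fibre product decomposition and the disjoint union statement.

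You instead bypass the connected-component count entirely. You use Proposition~4 to show directly that cross-block products $x_ax_b$ vanish, decompose $I$ accordingly, and appeal to Remark~\ref{Fibre-induct} for the fibre product formula; the disjoint union is then verified by hand. In effect you are re-deriving the content of Proposition~\ref{Components-fibre} inline rather than citing it. The paper's version is shorter and emphasizes that this lemma is really a corollary of Proposition~\ref{Components-fibre} once one translates the socle hypothesis into component data; your version is more self-contained and makes the role of Proposition~4 (which the paper's proof uses only implicitly) fully explicit. Either is fine, and your handling of the variable partition and the possibility of variables already in $I$ is more careful than the paper's.
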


\begin{proof}
Choose two arbitrary sets $\mathcal{H}_i$ and $\mathcal{H}_j$ from $\mathcal{H}_1, \ldots, \mathcal{H}_t$ and consider the two subcomplexes generated from the variables in $\mathcal{H}_i$ and $\mathcal{H}_j$.\\
Since $\mathcal{H}_i$ and $\mathcal{H}_j$ share no variables in common, no monomials in either one share a common factor other than $1$.\\
Since they lack a common factor, there is no connected subcomplex which contains both.\\
Therefore the variables in $\mathcal{H}_i$ and $\mathcal{H}_j$ form two distinct subcomplexes of $\Sigma (S)$ each of which contains at least a nontrivial or quasi-trivially connected component.\\
Since we have $t$ distinct sets of variables, $\Sigma (S)$ must contain $t$ nontrivial or quasi-trivially connected components.\\
Thus by Proposition~\ref{Components-fibre}, $\Sigma (S)$ can be written as the fibre product of $t$ rings, $S = R_1 \circ \cdots \circ R_t$, where $\Sigma (S)$ is the disjoint union of $\Sigma (R_1), \ldots, \Sigma (R_t)$.\\

\end{proof}

\begin{remark}
While Proposition~\ref{Components-fibre} and Lemma~\ref{sets-fiber} are similar, Proposition~\ref{Components-fibre} allows us to decompose a ring into a fibre product using only information about its components, whereas Lemma~\ref{sets-fiber} allows us to decompose a ring into a fibre product using only information about the truly isolated points of its survival complex, i.e. the generators of its socle.\\
 
\end{remark}
 
\begin{theorem}
Let $\Sigma(S)$ be a survival complex with exactly $m$ truly isolated points, no two of which share variables in common. Then $S$ can be decomposed into the fibre product of $m$ Pure Power rings.

\end{theorem}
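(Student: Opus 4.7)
The plan is to reduce the statement to Lemma~\ref{sets-fiber} followed by Theorem~\ref{Gore-oneSoc}. First I would let $h_1, \ldots, h_m$ be the truly isolated points of $\Sigma(S)$ and set $\mathcal{H}_i := \{h_i\}$. The hypothesis that no two truly isolated points share variables is precisely the variable-disjointness condition the lemma demands. There is one mild subtlety: Lemma~\ref{sets-fiber} partitions \emph{all} isolated points, not just the truly isolated ones. I would handle any quasi-isolated $x_t$ by noting that $x_j x_t^2 = 0$ for every $j$ (when $j = t$ because $x_t^3 = 0$, and when $j \neq t$ because $x_j x_t = 0$), so $x_t^2$ is truly isolated; hence $x_t^2 = h_j$ for a unique $j$, and adjoining $x_t$ to $\mathcal{H}_j$ preserves variable-disjointness across the distinct $\mathcal{H}_i$.

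Applying Lemma~\ref{sets-fiber} with this enlarged partition yields $S = R_1 \circ \cdots \circ R_m$ and $\Sigma(S) = \bigsqcup_{i=1}^{m} \Sigma(R_i)$. The next step is to verify that each $\Sigma(R_i)$ has exactly one truly isolated point. The key observation is that a monomial $w \in \Sigma(R_i)$ is truly isolated in $\Sigma(R_i)$ if and only if it is truly isolated in $\Sigma(S)$: any variable $y$ of $S$ lying outside the $i$th factor automatically satisfies $yw = 0$ in $S$ by the fibre product relations, so being annihilated by every variable of $R_i$ is equivalent to being annihilated by every variable of $S$. Thus the $m$ truly isolated points of $\Sigma(S)$ distribute across the $m$ disjoint subcomplexes, and since $h_i \in \Sigma(R_i)$ each component receives at least one; a pigeonhole count then gives exactly one per component.

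To finish, Remark~\ref{NumTIsopoints-dimsoc} translates ``exactly one truly isolated point in $\Sigma(R_i)$'' into socle dimension one for $R_i$, which is the standard characterization of Gorenstein for an Artinian local ring. Theorem~\ref{Gore-oneSoc} then forces $\Sigma(R_i)$ to be a pure power complex and hence $R_i$ to be a pure power ring, completing the desired decomposition. The hard part, such as it is, lies in the bookkeeping around quasi-isolated points at the start and in justifying that truly isolated points are preserved under the disjoint union decomposition; once these are set up, the conclusion is a direct chain of previously established results.
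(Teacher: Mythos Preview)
Your proposal is correct and follows essentially the same route as the paper: apply Lemma~\ref{sets-fiber} to obtain the fibre decomposition, argue that each factor carries exactly one truly isolated point, and then invoke Theorem~\ref{Gore-oneSoc}. Your treatment is in fact more careful than the paper's at two places---the explicit handling of quasi-isolated points when setting up the partition for Lemma~\ref{sets-fiber}, and the direct preservation-plus-pigeonhole argument for counting truly isolated points in each factor (the paper instead detours through Proposition~\ref{Components-pairwise} to recount the nontrivial components before reaching the same conclusion).
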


\begin{proof}

Let $\Sigma(S)$ be a survival complex with $m$ truly isolated points, no two of which share variables in common.\\
Then the isolated points of $\Sigma (S)$ can be divided into $m$ sets. Since no two truly isolated points share any variables in common then by Lemma~\ref{sets-fiber} we know that $S$ can be decomposed into the fibre product of $m$ rings where the survival complex of each ring corresponds to a subcomplex of $S$.\\

Since no truly isolated points share a common variable, the pairwise products of all the variables represented in different truly isolated points must be in the defining ideal of $S$.\\
To see this, let $q$ and $p$ be truly isolated points of $\Sigma(S)$ with $\lbrace x_1, \ldots, x_t \rbrace$ being the variables in $q$ and $\lbrace y_1, \ldots, y_w \rbrace$ being the variables in $p$.\\
Assume that $x_iy_j$ is not in the defining ideal of $S$.\\
Then $x_iy_j$ must be in $\Sigma (S)$ and hence by Proposition 4 is a factor of some truly isolated point of $\Sigma (S)$.\\
Therefore there exists a truly isolated point $g$ of $\Sigma (S)$ which shares variables with both $p$ and $q$, a contradiction.\\
Therefore, by Proposition 7, $\Sigma (S)$ must contain $m$ nontrivial or quasi-trivially connected components.\\

Since each of the $m$ components corresponds to a ring in the fibre product and each component only has one isolated point, then by Theorem~\ref{Gore-oneSoc} each ring in the fibre product must be Pure Power.\\
Thus $S$ can be decomposed into the fibre product of $m$ Pure Power rings.\\


%

\end{proof}

\section[Socle Analysis and Ideal Building]{Socle Analysis and Ideal Building}

\subsection{Correspondence between Ideals and Survival Complexes}
Let $X$ be a finite set of monomials such that no two monomials in $X$ are factors of one another. One thing we wish to know is given any such set $X$, when is it possible to find a ring whose survival complex's isolated points correspond exactly to $X$?\\

\begin{proposition} If $X$ is a set of monomials in $k[x_1, \ldots, x_n]$ consisting exactly of the isolated points of some survival complex $\Sigma \left( \frac{k[x_1, \ldots, x_n]}{I} \right)$, there is at most one such ideal $I$.
\end{proposition}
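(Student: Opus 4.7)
The plan is to show that $I$ is determined explicitly by $X$ via the formula $I = \{m \in A : m \text{ does not divide any element of } X\}$, where $A = k[x_1, \ldots, x_n]$. Since this description depends only on $X$ and the ambient polynomial ring, any two ideals realizing $X$ as their set of isolated points must coincide.

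First I would verify the formula by establishing that a nonzero monomial $m \in A$ has nonzero image in $S = A/I$ if and only if $m$ divides some element of $X$. One direction uses Proposition 4: every monomial of $\Sigma(S)$ divides some truly isolated point, and every truly isolated point lies in $X$ by definition (plus $1$, which divides everything). The other direction is easy: if $m$ divides some $x \in X$, then since $x$ is a vertex of $\Sigma(S)$ (and hence nonzero in $S$), its divisor $m$ must also be nonzero in $S$. Translating back, a nonzero monomial $m$ lies in $I$ iff its image in $S$ is zero, iff by the above $m$ divides no element of $X$, which gives the displayed formula.

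The main subtlety, which is minor, is that $X$ contains the quasi-isolated points as well as the truly isolated points, and Proposition 4 only talks about the latter. But this is harmless: each quasi-isolated $x_t$ is itself a vertex of $\Sigma(S)$ and hence, by Proposition 4, already divides some truly isolated point which is in $X$. So the ``divisor hull'' of $X$ coincides with that of its truly isolated subset, and the characterization of $I$ holds uniformly. A small degenerate case to note is $X = \emptyset$, which among Artinian monomial rings forces $\Sigma(S)$ to have no vertices and thus $I = (x_1, \ldots, x_n)$, where uniqueness is immediate; by Proposition 1, in all other cases $X$ is nonempty and the formula above applies.
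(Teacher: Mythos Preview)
Your proof is correct and rests on the same key observation as the paper's, namely Proposition~4: the monomials outside $I$ are precisely the divisors of the (truly) isolated points, so $I$ is determined by $X$. The paper argues by contradiction (assume $I \neq J$ both realize $X$, pick a monomial $j \in J \setminus I$, and use Proposition~4 to conclude $j \notin J$), while you package the same logic as an explicit formula $I = \{m : m \nmid x \text{ for all } x \in X\}$; your version is a bit more direct, but the underlying argument is the same.
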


\begin{proof}
Let $I,J \subset K[x_{1} \ldots, x_n] = R$ be finitely generated monomial ideals that contain pure powers of $x_i$ for all $1 \leq i \leq n$.\\
Let $X$ be simultaneously the set of isolated points of the survival complexes $\Sigma(R/I)$ and $\Sigma(R/J)$.\\
As we showed earlier, if either $I$ is a subset of $J$ or $J$ is a subset of $I$, one of the survival complexes is an induced subset of the other.\\
So assume $I \not \subset J$ and $J \not \subset I$.\\
Hence there exists a monomial $j \in J$ such that $ j \not \in I$ which implies that $0 \neq j \in R/I$ and hence $j$ is a vertex in $\Sigma(R/I)$.\\
As was proved earlier, every vertex of $\Sigma(R/I)$ must be a nonzero factor of an isolated point of $\Sigma(R/I)$.
By assumption, though, the isolated points of $\Sigma (R/I)$ are the same as the isolated points of $\Sigma (R/J)$.
 Furthermore, every factor of an isolated point must be in $\Sigma(R/J)$ which implies that $j \in \Sigma(R/J)$.\\
 Therefore $j \neq 0$ in $R/J$ and hence $j \not \in J$, a contradiction.\\
So for any such $X$, there is at most one ideal which generates $X$.\\

\end{proof}

\subsection{A Faulty Algorithm}
Now given any such set $X = \lbrace q_1, \ldots, q_t \rbrace$ it is always possible to find a ring whose survival complex's set of isolated points includes $X$ by applying the following method.\\

\begin{algorithm} \label{Alg-1}
Let $\lbrace x_1, \ldots x_r \rbrace$ be all the variables present in $X$. Start with the ring $k[x_{1}, \ldots, x_{r}]$.\\
Next, generate an ideal $I$ of $k[x_{1}, \ldots, x_{r}]$ in the following way. For any $x_s \in  \lbrace x_1, \ldots x_r \rbrace$, if $x_{s}^{a_s}$ is the highest power of $x_s$ appearing in $X$, then add $x_{s}^{a_s+1}$ to the generating set of $I$.\\
Next, for any $q_s \in X$, add $x_jq_s$ to the generating set of $I$ for all $1 \leq j \leq r$.\\
Finally, if any two variables $x_j$ and $x_s$ do not appear together in any $q_l$, add $x_jx_s$ to the generating set of the ideal.\\
\end{algorithm}
Then the set of isolated points of the survival complex $\Sigma (S)$ where $S = \frac{k[x_{1}, \ldots, x_{r}]}{I}$ will contain $X$ since for all monomials $m \in X$, $m \not \in I$ but $x_j m \in I$ for all $1 \leq j \leq r$.\\

\begin{example}
 Let $X = \lbrace x^2y^2, yz \rbrace$. Using Algorithm~\ref{Alg-1}, we start with the ring $k[x,y,z]$ and then create an ideal $I$. The first monomials we add to the generating set of $I$ are $\lbrace x^3,y^3,z^2 \rbrace$. Next we add $x^3y^2,x^2y^3,xyz,y^2z,$ and $yz^2$. However, $x^3y^2,x^2y^3$ and $yz^2$ are already generated by other elements of the generating set of $I$. So our generating set for $I$ now looks like $\lbrace x^3,y^3,z^2 , xyz,y^2z \rbrace$. Finally, since $x$ and $z$ do not appear together in any monomial of $X$, we add $xz$ to the generating set and hence can remove $xyz$.\\
So $I = (x^3,y^3,z^2 ,y^2z,xz)$ and the survival complex is $\Sigma (S)$ where $S = \frac{k[x,y,z]}{(x^3,y^3,z^2 ,y^2z,xz)}$.\\

\end{example}
\begin{figure}[h]

\begin{center}

\begin{tikzpicture}
[scale=5, vertices/.style={draw, fill=black, circle, inner sep=1.8pt}]

\node[vertices] (y) at (-0.5,0) {};
\node[left] at (-0.5,0) {$y$};
\node[vertices] (x) at (0.5,0) {};
\node[right] at (0.5,0) {$x$};

\node[vertices] (z) at (-0.8,-0.3) {};
\node[below] at (-0.8,-0.3) {$z$};
\node[vertices] (yz) at (-0.8,0.3) {};
\node[right] at (-0.8,0.3) {$yz$};

\node[vertices] (xy) at (0,-0.5) {};
\node[below] at (0,-0.5) {$xy$};
\node[vertices] (yy) at (0.5,0.3) {};
\node[right] at (0.5,0.3) {$y^2$};
\node[vertices] (xx) at (-0.5,0.3) {};
\node[left] at (-0.5,0.3) {$x^2$};

\node[vertices] (xxy) at (-0.5,-0.5) {};
\node[left] at (-0.5,-0.5) {$x^2y$};
\node[vertices] (xxyy) at (0,0.5) {};
\node[below] at (0,0.5) {$x^2y^2$};
\node[vertices] (xyy) at (0.5,-0.5) {};
\node[right] at (0.5,-0.5) {$xy^2$};

\filldraw[fill=black!20, draw=black] (0,-0.5)--(-0.5,0)--(0.5,0)--cycle;

\draw (xyy)--(x);
\draw (yy)--(x);
\draw (yy)--(xx);
\draw (y)--(xx);
\draw (y)--(xxy);
\draw (y)--(xxy);
\draw (y)--(z);

\end{tikzpicture}

\caption{Survival Complex of $R = \frac{k[x,y,z]}{(x^3,y^3,z^2 ,y^2z,xz)}$}

\end{center}

\end{figure}
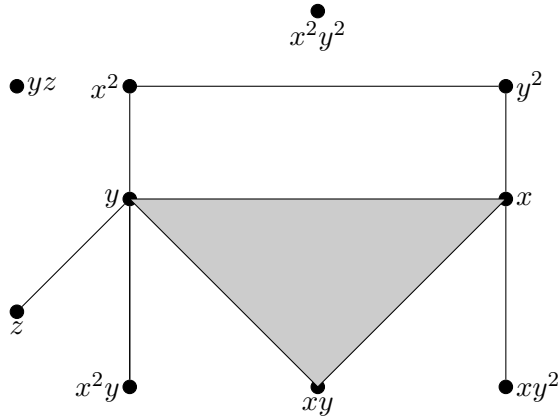

\begin{remark}
In the previous example,  Algorithm~\ref{Alg-1} produced a ring whose survival complex's isolated points were exactly $X$. This does not always happen though as the next example will show.\\

\end{remark}

\begin{example}
Let $X =\lbrace x^2y^2, x^3,y^4 \rbrace$. Using Algorithm~\ref{Alg-1}, we create an ideal $I$ of the ring $k[x,y]$ as follows. First, add $x^4$ and $y^5$ to the generating set of $I$. Next, add $xy^4, x^3y^2, x^2y^3$, and $x^3y$ to the generating set of $I$. Hence we can write $I = (x^4,y^5, x^3y, x^2y^3, xy^4)$.\\
This then gives the ring $S = \frac{k[x,y]}{(x^4,y^5, x^3y, x^2y^3, xy^4)}$.\\
$\Sigma (S)$, however, has four isolated points as is shown in the figure below.\\
\end{example}

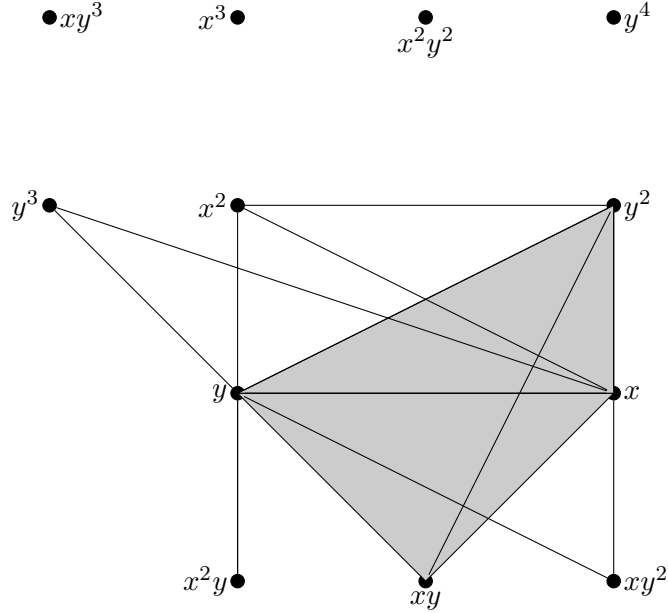
\begin{figure}[h]

\begin{center}

\begin{tikzpicture}
[scale=5, vertices/.style={draw, fill=black, circle, inner sep=1.8pt}]

\node[vertices] (y) at (-0.5,0) {};
\node[left] at (-0.5,0) {$y$};
\node[vertices] (x) at (0.5,0) {};
\node[right] at (0.5,0) {$x$};

\node[vertices] (xy) at (0,-0.5) {};
\node[below] at (0,-0.5) {$xy$};
\node[vertices] (yy) at (0.5,0.5) {};
\node[right] at (0.5,0.5) {$y^2$};
\node[vertices] (xx) at (-0.5,0.5) {};
\node[left] at (-0.5,0.5) {$x^2$};

\node[vertices] (xxy) at (-0.5,-0.5) {};
\node[left] at (-0.5,-0.5) {$x^2y$};
\node[vertices] (xxyy) at (0,1) {};
\node[below] at (0,1) {$x^2y^2$};
\node[vertices] (xyy) at (0.5,-0.5) {};
\node[right] at (0.5,-0.5) {$xy^2$};

\node[vertices] (xxx) at (-0.5,1) {};
\node[left] at (-0.5,1) {$x^3$};
\node[vertices] (yyyy) at (0.5,1) {};
\node[right] at (0.5,1) {$y^4$};
\node[vertices] (yyy) at (-1,0.5) {};
\node[left] at (-1,0.5) {$y^3$};
\node[vertices] (xyyy) at (-1,1) {};
\node[right] at (-1,1) {$xy^3$};

\filldraw[fill=black!20, draw=black] (0,-0.5)--(-0.5,0)--(0.5,0)--cycle;
\filldraw[fill=black!20, draw=black] (0.5,0.5)--(-0.5,0)--(0.5,0)--cycle;

\draw (y)--(x);
\draw (xyy)--(x);
\draw (yy)--(x);
\draw (yy)--(xx);
\draw (y)--(xx);
\draw (y)--(xxy);
\draw (y)--(xxy);

\draw (y)--(yyy);
\draw (x)--(xx);
\draw (y)--(yy);
\draw (y)--(xyy);
\draw (xy)--(yy);
\draw (x)--(yyy);

\end{tikzpicture}

\caption{Survival Complex of $R = \frac{k[x,y]}{(x^4,y^5,x^3y ,x^2y^3,xy^4)}$}

\end{center}

\end{figure}

\vspace{2 cm}

\subsection{A Two Dimensional Algorithm}

We will now consider just the two dimensional case.
Given a finite set $X$ of monomials in $x$ and $y$ such that no two monomials in $X$ are factors of one another,
 we will apply the following algorithm to get a ring $R$ such that the truly isolated points of $\Sigma (R)$ are precisely $X$.\\

\begin{algorithm} \label{Alg-2}
Since we are in the two dimensional case, we start with a polynomial ring $k[x,y]$ and construct an ideal $I$ as follows.\\

Let $x^t$ and $y^s$ be the highest powers of $x$ and $y$ which appear in $X$. Then add $x^{t+1}$ and $y^{s+1}$ to the generators of $I$.\\
Now take the element of $X$ which contains $y^s$, say $x^{a_1}y^s$, and the element which contains the next highest power of $y$, say $x^{a_2}y^{b_2}$. Then add $x^{a_1+1}y^{b_2 + 1}$ to the generators of $I$.\\
Repeat this process while comparing $x^{a_2}y^{b_2}$ to the element of $X$ with the next highest power of $y$.\\
Continue until there are no more elements of $X$.\\
\end{algorithm}

\begin{example}
We return to the set $X =\lbrace x^2y^2, x^3,y^4 \rbrace$ from Example 5. Using Algorithm~\ref{Alg-2}, we create an ideal $I$ of the ring $k[x,y]$ as follows. First, add $x^4$ and $y^5$ to the generating set of $I$. Next, note that $y^4$ contains the largest power of $y$ and $x^2y^2$ contains the second largest power of $y$.
Following Algorithm~\ref{Alg-2}, add $xy^3$ to the generating set of $I$.\\
Next, consider $x^2y^2$ and $x^4$. Following Algorithm~\ref{Alg-2}, add $x^3y$ to the generating set of $I$.\\
  Hence we can write $I = (x^4,y^5, x^3y, xy^3)$.\\
This then gives the ring $R = \frac{k[x,y]}{(x^4,y^5, x^3y, xy^3)}$.\\
Then as is shown in the figure below $\Sigma (R)$ has three isolated points which are precisely the monomials in $X$.\\

\end{example}

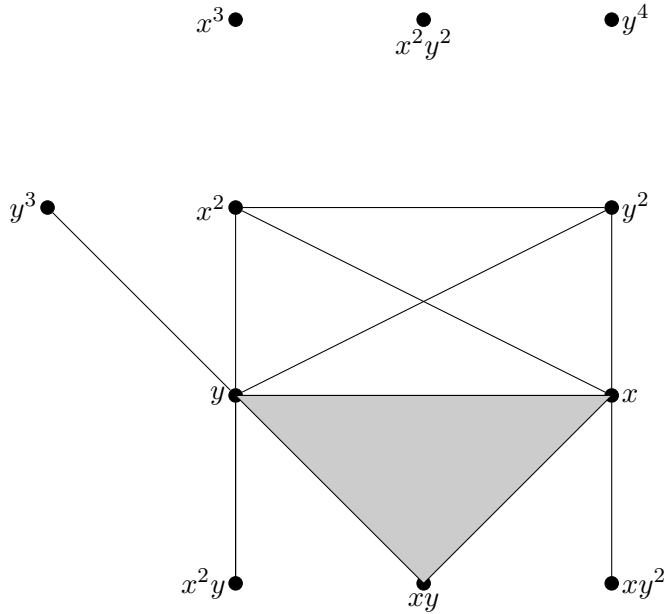
\begin{figure}[h]

\begin{center}
\begin{tikzpicture}
[scale=5, vertices/.style={draw, fill=black, circle, inner sep=1.8pt}]

\node[vertices] (y) at (-0.5,0) {};
\node[left] at (-0.5,0) {$y$};
\node[vertices] (x) at (0.5,0) {};
\node[right] at (0.5,0) {$x$};

\node[vertices] (xy) at (0,-0.5) {};
\node[below] at (0,-0.5) {$xy$};
\node[vertices] (yy) at (0.5,0.5) {};
\node[right] at (0.5,0.5) {$y^2$};
\node[vertices] (xx) at (-0.5,0.5) {};
\node[left] at (-0.5,0.5) {$x^2$};

\node[vertices] (xxy) at (-0.5,-0.5) {};
\node[left] at (-0.5,-0.5) {$x^2y$};
\node[vertices] (xxyy) at (0,1) {};
\node[below] at (0,1) {$x^2y^2$};
\node[vertices] (xyy) at (0.5,-0.5) {};
\node[right] at (0.5,-0.5) {$xy^2$};

\node[vertices] (xxx) at (-0.5,1) {};
\node[left] at (-0.5,1) {$x^3$};
\node[vertices] (yyyy) at (0.5,1) {};
\node[right] at (0.5,1) {$y^4$};
\node[vertices] (yyy) at (-1,0.5) {};
\node[left] at (-1,0.5) {$y^3$};

\filldraw[fill=black!20, draw=black] (0,-0.5)--(-0.5,0)--(0.5,0)--cycle;

\draw (y)--(x);
\draw (xyy)--(x);
\draw (yy)--(x);
\draw (yy)--(xx);
\draw (y)--(xx);
\draw (y)--(xxy);
\draw (y)--(xxy);

\draw (y)--(yyy);
\draw (x)--(xx);
\draw (y)--(yy);

\end{tikzpicture}

\caption{Survival Complex of $R = \frac{k[x,y]}{(x^4,y^5,x^3y ,xy^3)}$}

\end{center}

\end{figure}

\begin{proposition}
Let $X$ be a finite set of monomials in $x$ and $y$ such that no two monomials in $X$ are factors of one another.\\
Then it is always possible to find a ring $R$ whose truly isolated points are exactly $X$.
\end{proposition}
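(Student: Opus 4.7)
The plan is to show that the ring $R = k[x,y]/I$ produced by Algorithm~\ref{Alg-2} has truly isolated points exactly equal to $X$. This splits into two halves: every element of $X$ is truly isolated in $\Sigma(R)$, and conversely no monomial outside $X$ is.

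For the setup, I would label the elements of $X$ as $m_i = x^{\alpha_i}y^{\beta_i}$ for $i=1,\ldots,k$ with $\beta_1 > \beta_2 > \cdots > \beta_k$; the antichain condition on $X$ forces $\alpha_1 < \alpha_2 < \cdots < \alpha_k$. The ideal produced by the algorithm is then
\[
I = \bigl(x^{\alpha_k+1},\; y^{\beta_1+1},\; x^{\alpha_1+1}y^{\beta_2+1},\; x^{\alpha_2+1}y^{\beta_3+1},\; \ldots,\; x^{\alpha_{k-1}+1}y^{\beta_k+1}\bigr).
\]
For the easy inclusion, each $m_i$ is truly isolated by three routine checks. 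The staircase inequalities $\beta_{i+1}+1 \leq \beta_i$ and $\alpha_{i-1}+1 \leq \alpha_i$ show that the generators $x^{\alpha_i+1}y^{\beta_{i+1}+1}$ and $x^{\alpha_{i-1}+1}y^{\beta_i+1}$ divide $xm_i$ and $ym_i$ respectively (with the extreme cases $i=k$ and $i=1$ handled by the pure powers $x^{\alpha_k+1}$ and $y^{\beta_1+1}$); and a direct exponent comparison shows no generator of $I$ divides $m_i$, so $m_i \notin I$.

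The main obstacle is the reverse inclusion: no extra truly isolated point can appear. Suppose $m = x^a y^b$ satisfies $m \notin I$ while $xm, ym \in I$. I would argue by identifying which generator of $I$ kills $xm$ and which kills $ym$. If the pure-power generator $x^{\alpha_k+1}$ handles $xm$, then $a = \alpha_k$, and applying $m \notin I$ to the staircase generator at index $k-1$ pins down $b = \beta_k$, so $m = m_k$; the symmetric argument with $y^{\beta_1+1}$ yields $m = m_1$. The delicate case is when both $xm$ and $ym$ are killed by staircase generators, say $x^{\alpha_j+1}y^{\beta_{j+1}+1}$ and $x^{\alpha_{j'}+1}y^{\beta_{j'+1}+1}$ respectively. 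Combining the divisibility conditions with the constraint $m \notin I$ applied at those same indices pins down $a = \alpha_j$ and $b = \beta_{j'+1}$ with $j \geq j'+1$. The crux of the argument is then the following observation: if $j > j'+1$, the intermediate staircase generator $x^{\alpha_{j'+1}+1}y^{\beta_{j'+2}+1}$ already divides $m$, contradicting $m \notin I$. Therefore $j = j'+1$ and $m = m_j \in X$, completing the proof.
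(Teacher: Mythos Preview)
Your proposal is correct and follows the same overall strategy as the paper: build $I$ via Algorithm~\ref{Alg-2} and verify both inclusions. The difference lies in how the reverse inclusion is argued. You track both conditions $xm\in I$ and $ym\in I$, identify the responsible generators at indices $j$ and $j'$, and then use an intermediate staircase generator to force $j=j'+1$. The paper is more economical: it uses only the condition $xm\in I$. From a generator $x^{a_1}y^{b_1}$ dividing $xm$ together with $m\notin I$ one gets $m=x^{a_1-1}y^{c}$, and the algorithm guarantees some $g=x^{a_1-1}y^{t}\in X$ with that same $x$-exponent. A single comparison of $c$ with $t$ then finishes: $c>t$ would make $g\mid m$, contradicting that $g$ is isolated; $c<t$ would make $m\mid g$, contradicting that $m$ is isolated; hence $m=g\in X$. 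Your index-chasing works, but the paper's observation that every generator's $x$-exponent is one more than some $\alpha_i$ lets you bypass the $ym$ analysis entirely.
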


\begin{proof}
Consider $R = \frac{k[x,y]}{I}$ where $I$ is generated as described by Algorithm 2.\\
Then $\Sigma(R)$ clearly contains $X$ in its set of isolated points.

Assume that $\Sigma(R)$ has a truly isolated point $m$ which is not contained in $X$. 
Hence $xm \in I$ which means that for some generator $x^{a_1}y^{b_1}$, $xm = qx^{a_1}y^{b_1}$ where $q = x^{a_2}y^{b_2}$.\\
But then $m = x^{a_1 + a_2 -1}y^{b_1 + b_2}$.\\
If $a_2 > 0$, then $x^{a_1}y^{b_1}$ would be a factor of $m$ which would imply that $m \in I$, a contradiction.\\
Thus $m = x^{a_1-1}y^{b_1 +b_2}$.\\
Since Algorithm~\ref{Alg-2} was used to generate $I$, however, there exists a monomial $g \in X$ where $g = x^{a_1 -1}y^t$.\\

Since we assumed that $m \not \in X$, then either $b_1 +b_2 > t$ or $b_1 +b_2 < t$.\\
If $b_1 +b_2 > t$, then $g$ divides $m$ which implies that $g$ is not isolated, a contradiction.\\
If $b_1 + b_2 < t$, then $m$ divides $g$ which implies that $m$ is not isolated.\\

Therefore, $m$ can not exist.\\
Hence the truly isolated points of $\Sigma (R)$ are precisely $X$.\\

\end{proof}

\begin{remark}
Notice that Algorithm~\ref{Alg-2} allows for the creation of quasi-isolated points as well. If you wish to look at only those sets $X$ where $\Sigma (S)$ of $S$ generated by Algorithm~\ref{Alg-2} has no quasi-isolated points, simply define $X$ as follows.\\
Let $X$ be a finite set of monomials in $x$ and $y$ such that no two monomials in $X$ are factors of one another and $x^2$ (respectively $y^2$) is not in $X$ unless $x$ (respectively $y$) is a factor of another monomial in $X$.\\
By restricting $X$ this way, it forces there to always be an edge from $x$ and $y$ to another monomial, meaning that neither $x$ nor $y$ can be quasi-isolated.\\

\end{remark}

\subsection{Extension to a Specific Three Dimensional Case}
In this section we will extend Algorithm~\ref{Alg-2} to a specific three dimensional case. Let $X$ be a finite set of monomials in $x_1$, $x_2$, and $x_3$ such that $X = x_{i}^lY$, $1 \leq i \leq 3$, where $Y \subset k[x_j,x_f]$, $1 \leq j,f \leq 3$, $j \neq f \neq i$, is a finite set of monomials such that no two monomials are factors of each other.\\
We will apply the following algorithm to get a ring $R$ such that the truly isolated points of $\Sigma (R)$ are precisely $X$.\\

\begin{algorithm} \label{Alg-3}
Start with the polynomial ring $k[x_1,x_2,x_3]$ and construct an ideal $I$ as follows. 

Denote by $z$ the $x_i$ such that $X = x_{i}^lY$. 
Denote by $x$ and $y$ the two remaining $x_j$'s.\\
First add $z^{l+1}$ to the generators of $I$.
Now let $x^t$ and $y^s$ be the highest powers of $x$ and $y$ which appear in $X$. Then add $x^{t+1}$ and $y^{s+1}$ to the generators of $I$.\\
Now take the element of $X$ which contains $y^s$, say $x^{a_1}y^sz^l$, and the element which contains the next highest power of $y$, say $x^{a_2}y^{b_2}z^l$. Then add $x^{a_1+1}y^{b_2 + 1}$ to the generators of $I$.\\
Repeat this process while comparing $x^{a_2}y^{b_2}z^l$ to the element of $X$ with the next highest power of $y$.\\
Continue until there are no more elements of $X$.\\

\end{algorithm}

\begin{remark}
Note that since every element of $X$ is of the form $x^ay^bz^l$ in order for no two points to be factors of each other the degree of $y$ must decrease as the degree of $x$ increases.\\
\end{remark}

\begin{example}
Let $X =\lbrace x_{1}^2x_{2}^2, x_{2}^2x_{3}^2 \rbrace$. Using Algorithm~\ref{Alg-3}, we create an ideal $I$ of the ring $k[x_1,x_2,x_3]$ as follows. First, add $x_{1}^3$, $x_{2}^3$ and $x_{3}^3$ to the generating set of $I$. Next, note that all elements of $X$ contain $x_{2}^2$ which means that we will only be adding elements of the form $x_{1}^ax_{3}^b$ to $I$.
Since $X$ contains only two monomials, by Algorithm~\ref{Alg-3} the only monomial we add to the generators of $I$ is $x_{1}x_{3}$\\
Hence we can write $I = (x_{1}^3,x_{2}^3,x_{3}^3,x_1x_3)$.\\
This then gives the ring $R = \frac{k[x_1,x_2,x_3]}{(x_{1}^3,x_{2}^3,x_{3}^3,x_1x_3)}$.\\
Then as is shown in the figure below $\Sigma (R)$ has two truly isolated points which are precisely the monomials in $X$.\\

\end{example}

\begin{figure}[h]

\begin{center}

\begin{tikzpicture}
[scale=5, vertices/.style={draw, fill=black, circle, inner sep=1.8pt}]

\node[vertices] (y) at (0,0) {};
\node[above] at (0,0) {$x_2$};
\node[vertices] (x) at (-1,0) {};
\node[left] at (-1,0) {$x_1$};
\node[vertices] (z) at (1,0) {};
\node[right] at (1,0) {$x_3$};

\node[vertices] (xy) at (-1,-0.5) {};
\node[below] at (-1,-0.5) {$x_1x_2$};
\node[vertices] (xx) at (-0.6,-0.5) {};
\node[below] at (-0.6,-0.5) {$x_{1}^2$};
\node[vertices] (xxy) at (-0.4,-0.5) {};
\node[below] at (-0.4,-0.5) {$x_{1}^2x_2$};

\node[vertices] (yz) at (1,-0.5) {};
\node[below] at (1,-0.5) {$x_2x_3$};
\node[vertices] (zz) at (0.6,-0.5) {};
\node[below] at (0.6,-0.5) {$x_{3}^2$};
\node[vertices] (yzz) at (0.4,-0.5) {};
\node[below] at (0.4,-0.5) {$x_2x_{3}^2$};

\node[vertices] (yy) at (0,0.5) {};
\node[above] at (0,0.5) {$x_{2}^2$};

\node[vertices] (xxyy) at (-1,0.5) {};
\node[above] at (-1,0.5) {$x_{1}^2x_{2}^2$};
\node[vertices] (yyzz) at (1,0.5) {};
\node[above] at (1,0.5) {$x_{2}^2x_{3}^2$};

\filldraw[fill=black!20, draw=black] (1,0)--(0,0)--(1,-0.5)--cycle;
\filldraw[fill=black!20, draw=black] (-1,0)--(0,0)--(-1,-0.5)--cycle;

\draw (y)--(x);
\draw (y)--(z);
\draw (yy)--(x);
\draw (yy)--(z);

\draw (y)--(xy);
\draw (y)--(xx);
\draw (y)--(xxy);
\draw (y)--(yz);
\draw (y)--(zz);
\draw (y)--(yzz);

\draw (yy)--(x);
\draw (yy)--(z);

\draw (z)--(yz);
\draw (x)--(xy);

\end{tikzpicture}

\caption{Survival Complex of $R = \frac{k[x_1,x_2,x_3]}{(x_{1}^3,x_{2}^3,x_{3}^3 , x_1x_3)}$}

\end{center}

\end{figure}
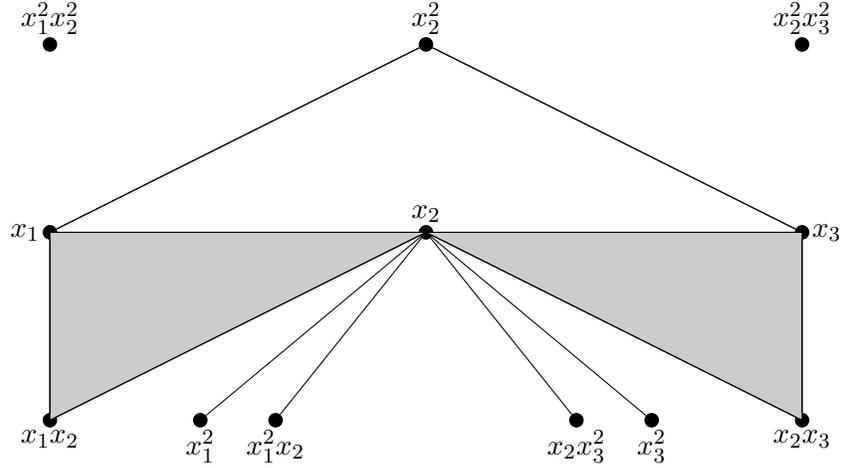

\begin{proposition}
Let $X$ be a finite set of monomials in $x_1$, $x_2$, and $x_3$ such that $X = x_{i}^lY$, $1 \leq i \leq 3$, where $Y \subset k[x_j,x_f]$, $1 \leq j,f \leq 3$, $j \neq f \neq i$, is a finite set of monomials such that no two monomials are factors of each other.\\
Then it is always possible to find a ring $R$ whose truly isolated points are exactly $X$.
\end{proposition}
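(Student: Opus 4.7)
The plan is to mirror the proof of the two-dimensional case by applying Algorithm~\ref{Alg-3} to $X$, writing $z$, $x$, $y$ for $x_i$, $x_j$, $x_f$ so that $X = z^l Y$ with $Y \subset k[x,y]$, and then checking that the resulting ring $R = k[x,y,z]/I$ has truly isolated points exactly $X$. The key structural observation is that the ideal $I$ produced by Algorithm~\ref{Alg-3} decomposes as $I = (z^{l+1}) + I_{2D} \cdot k[x,y,z]$, where $I_{2D} \subset k[x,y]$ is precisely the ideal that Algorithm~\ref{Alg-2} would produce from the set $Y$. This decomposition will let me reduce most verifications directly to the already-proved two-dimensional case.

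First I would show that every $m = x^a y^b z^l \in X$ is truly isolated in $\Sigma(R)$. Multiplying by $z$ gives $x^a y^b z^{l+1}$, which lies in $I$ via the generator $z^{l+1}$. Multiplying by $x$ or $y$ produces $x^{a+1} y^b z^l$ or $x^a y^{b+1} z^l$; these land in $I$ by the same case analysis used in the two-dimensional proof, since if $m$ corresponds to the $k$-th element of $Y$ under the decreasing-$y$ ordering, then $x \cdot m$ is divisible by the generator $x^{a_k+1} y^{b_{k+1}+1}$ (using $b_{k+1}+1 \leq b_k$) and $y \cdot m$ is divisible by $x^{a_{k-1}+1} y^{b_k+1}$ (using $a_{k-1}+1 \leq a_k$), with the extreme cases (largest $y$-exponent and largest $x$-exponent) handled by $y^{s+1}$ and $x^{t+1}$. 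I also need $m \notin I$, which is immediate since $m$ has $z$-exponent exactly $l < l+1$ and $x^a y^b \notin I_{2D}$ by the corresponding two-dimensional statement.

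The main step and principal obstacle is the converse: showing that no monomial outside $X$ is truly isolated. Suppose $m = x^a y^b z^c$ is truly isolated in $\Sigma(R)$. Since no generator of $I$ other than $z^{l+1}$ involves $z$, the assumption $c < l$ together with $zm \in I$ would force $x^a y^b$ itself to be divisible by one of $x^{t+1}$, $y^{s+1}$, or $x^{a_k+1} y^{b_{k+1}+1}$, which would put $m$ into $I$ and contradict $m \in \Sigma(R)$. Hence $c = l$. With $c = l$, the conditions $xm \in I$ and $ym \in I$ reduce to $x \cdot x^a y^b \in I_{2D}$ and $y \cdot x^a y^b \in I_{2D}$, because the generator $z^{l+1}$ cannot divide $xm$ or $ym$, both of which have $z$-degree $l$. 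Combined with $m \notin I$ forcing $x^a y^b \notin I_{2D}$, this is exactly the statement that $x^a y^b$ is truly isolated in $\Sigma(k[x,y]/I_{2D})$. The two-dimensional proposition that follows Algorithm~\ref{Alg-2} then identifies $x^a y^b$ as an element of $Y$, so $m = x^a y^b z^l \in X$, completing the argument.
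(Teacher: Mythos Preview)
Your proof is correct, and in fact handles one point more cleanly than the paper does. The paper argues directly, essentially repeating the two-dimensional computation inline: it takes a hypothetical extra truly isolated point $m = x^{a_m}y^{b_m}z^c$, analyses the relation $xm \in I$ by hand to pin down the $x$-exponent, and then compares to an element $g = x^{a_1-1}y^d z^l \in X$ to derive a contradiction, without ever explicitly establishing $c=l$ or invoking the two-dimensional result as a black box. Your route is more modular: you first isolate the structural decomposition $I = (z^{l+1}) + I_{2D}\cdot k[x,y,z]$, use the condition $zm \in I$ to force $c=l$, and then observe that the remaining conditions $xm,\,ym \in I$ and $m \notin I$ translate verbatim into the statement that $x^a y^b$ is truly isolated in $\Sigma(k[x,y]/I_{2D})$, at which point the already-proved two-variable proposition finishes the job. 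The payoff of your organisation is twofold: it makes transparent why Algorithm~\ref{Alg-3} is really just Algorithm~\ref{Alg-2} tensored with $k[z]/(z^{l+1})$, and it suggests how one would extend the result to any set of the form $X = x_{i_1}^{l_1}\cdots x_{i_r}^{l_r}Y$ with $Y$ in two remaining variables. The paper's hands-on version, by contrast, stays closer to the concrete generators and avoids appealing to the earlier proposition, at the cost of a small imprecision in the case analysis when $c<l$.
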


\begin{proof}
Consider $R = \frac{k[x_1,x_2,x_3]}{I}$ where $I$ is the ideal constructed in Algorithm~\ref{Alg-3}.
We will denote by $z$ the $x_i$ such that $X = x_{i}^lY$. Denote by $x$ and $y$ the other two $x_j$'s.\\
Clearly $X$ is contained in the set of isolated points of $\Sigma (R)$.\\

Assume that $\Sigma(R)$ has a truly isolated point $m$ which is not contained in $X$.
Then $m = x^{a_m}y^{b_m}z^c$ where $0 \leq a_m$, $0 \leq b_m$, and $0 \leq c \leq l$.

Since $m $ is truly isolated we can write that $xm =qt$ for some generator $t$ of $I$. From our algorithm, we know that only generator of $I$ to contain $z$ is $z^{l+1}$. Since $c \leq l$, $t$ cannot contain $z$ as a factor.\\
Therefore we can write $x(x^{a_m}y^{b_m}z^c) = q(x^{a_1}y^{b_1})$ where $x^{a_1}y^{b_1}$ is a generator of $I$. This implies that $q = x^{a_2}y^{b_2}z^c$ and that $a_m = a_1 +a_2 -1$ while $b_m = b_1 +b_2$.\\
If $a_2 > 0$, then $x^{a_1}y^{b_1}$ would be a factor of $m$ which would imply that $m \in I$, a contradiction.\\
Thus $m = x^{a_1-1}y^{b_1 +b_2}z^c$.\\
However, by the algorithm we used to generate $I$, there exists a monomial $g \in X$ of the form $g = x^{a_1 -1}y^dz^l$.\\

Since we assumed that $m \not \in X$, and we know that $c \leq l$ then either $b_1 +b_2 > d$ or $b_1 +b_2 < d$.\\
If $b_1 +b_2 > d$, then $g$ divides $m$ which implies that $g$ is not isolated, a contradiction.
If $b_1 + b_2 < d$, then $m$ divides $g$ which implies that $m$ is not isolated.
Therefore, $m$ can not exist.\\
Hence the truly isolated points of $\Sigma (R)$ are precisely $X$.\\

\end{proof}



\section*{Acknowledgments}

My sincerest thanks to my advisor, Neil Epstein at George Mason University. Without his support and mathematical knowledge this paper would never have been possible.

\bibliographystyle{amsalpha} 
\bibliography{WolffThesisbib}

\providecommand{\bysame}{\leavevmode\hbox to3em{\hrulefill}\thinspace}
\providecommand{\MR}{\relax\ifhmode\unskip\space\fi MR }
\providecommand{\MRhref}[2]{%
  \href{http://www.ams.org/mathscinet-getitem?mr=#1}{#2}
}
\providecommand{\href}[2]{#2}
\begin{thebibliography}{DJLP10}

\bibitem[Bei93]{NAGADM}
Mark~B. Beintema, \emph{A note on {Artinian} {Gorenstein} algebras defined by
  monomials}, Rocky Mountain Journal of Mathematics \textbf{23} (1993), no.~1.

\bibitem[BH98]{C-MRings}
Winfried Bruns and J{\"u}rgen Herzog, \emph{Cohen-macaulay rings, revised},
  Cambridge University Press, New York, 1998.

\bibitem[CR10]{GIIFP}
Aldo Conca and Tim R{\"o}mer, \emph{Generic initial ideals and fibre products},
  Osaka Journal of Mathematics \textbf{47} (2010), no.~1, arXIV:0707.0678
  [math.AC].

\bibitem[DD05]{Meyer1}
Frank DeMeyer and Lisa DeMeyer, \emph{Zero divisor graphs of semigroups},
  Journal of Algebra \textbf{238} (2005).

\bibitem[DJLP10]{Meyer2}
Lisa DeMeyer, Yunjiang Jiang, Cleland Loszewski, and Erica Purdy,
  \emph{Classification of commuatative zero-divisor semigroup graphs}, Rocky
  Mountain Journal of Mathematics \textbf{40} (2010), no.~5.

\bibitem[FMS14]{SurveySR}
Christopher~A Francisco, Jeffrey Mermin, and Jay Schweig, \emph{Connections
  between algebra, combinatorics, and geometry}, ch.~A Survey of
  Stanley-Reisner Theory, Springer-Verlag, New York, 2014.

\bibitem[MV04]{LCMIDF}
Thomas Marley and Janet~C. Vassilev, \emph{Local cohomology modules with
  infinite dimensional socles}, Proceedings of the American Mathematical
  Society \textbf{132} (2004), arXiv:math/0209196 [math.AC].

\bibitem[MY11]{ZGSemigroup}
Reza~Hamid Maimani and Siamak Yassemi, \emph{On the zero-divisor graphs of
  commutative semigroups}, Houston Journal of Mathematics \textbf{37} (2011),
  no.~3.

\end{thebibliography}

\end{document}